\title[Convergence of consensus-based global optimization method]{Convergence  of a first-order consensus-based global optimization algorithm}
\author[Ha]{Seung-Yeal Ha}
\address[Seung-Yeal Ha]{\newline Department of Mathematical Sciences and Research Institute of Mathematics, \newline
Seoul National University, Seoul, 08826 and \newline
Korea Institute for Advanced Study, Hoegiro 85, Seoul, 02455, Korea (Republic of)}
\email{syha@snu.ac.kr}
\author[Jin]{Shi Jin}
\address[Shi Jin]{\newline School of Mathematical Sciences, MOE-LSC, and Institute of Natural Sciences, \newline Shanghai Jiao Tong University, Shanghai 200240, China}
\email{shijin-m@sjtu.edu.cn}
\author[Kim]{Doheon Kim}
\address[Doheon Kim]{\newline School of Mathematics, Korea Institute for Advanced Study,\newline Hoegiro 85, Seoul 02455, Korea (Republic of)}
\email{doheonkim@kias.re.kr}
\newtheorem{theorem}{Theorem}[section]
\newtheorem{lemma}{Lemma}[section]
\newtheorem{remark}{Remark}[section]
\newcommand{\bbr}{\mathbb R}
\newcommand{\bbe}{\mathbb E}
\newcommand{\bbp} {\mathbb P}
\begin{document}

\date{\today}

\subjclass[2010]{37H10, 37N40, 90C26.} 
\keywords{Consensus-based optimization,  Gibb's distribution, global optimization, machine learning, objective function}


\thanks{\textbf{Acknowledgment.} The work of S.-Y. Ha was supported by National Research Foundation of Korea(NRF-2017R1A2B2001864), and the work of S. Jin was supported by NSFC Grant Nos. 11871297 and  3157107.}

\begin{abstract}
Global optimization of a non-convex objective function often appears in large-scale machine-learning and artificial intelligence  applications. Recently, consensus-based optimization (in short CBO) methods have been introduced as one of the gradient-free optimization methods. In this paper, we provide a convergence analysis for the first-order CBO method in \cite{C-J-L-Z}. Prior to the current work, the convergence study was carried out for CBO methods on corresponding mean-field limit, a Fokker-Planck equation, which does not imply the convergence of the CBO method {\it per se}. Based on the consensus estimate directly on  the first-order CBO model, we provide a  convergence analysis of the first-order CBO method \cite{C-J-L-Z} without resorting to the corresponding mean-field model. Our convergence analysis consists of two steps. In the first step, we show that the CBO model exhibits a global consensus time asymptotically for any initial data, and in the second step, we provide a sufficient condition on system parameters--which is dimension independent-- and initial data which guarantee that the converged consensus state lies in a small neighborhood of the global minimum almost surely. 
\end{abstract}
\maketitle \centerline{\date}


\section{Introduction} \label{sec:1}
\setcounter{equation}{0}
Large-scale optimization problems often appear in machine learning and artificial intelligence (AI) applications, in which objective functions to be optimized are not necessarily convex nor regular enough, say ${\mathcal C}^1$ in general. Thus, one might not be able to use the standard stochastic gradient descent method. Alternatively, several gradient-free optimization methods based on collective dynamics are used in application domain, for example swarm intelligence methods \cite{Ke, Y-D} such as particle swarm optimization (in short PSO) \cite{E-K}, simulated annealing method \cite{K-G-V, L-A}, ant-colony algorithm \cite{Ya}, genetic algorithm \cite{Ho}  etc. A basic idea of these metaheuristic algorithm is to use collective behaviors of underlying sample points coupled with suitable stochastic components in the choice of system parameters.  Despite of its usefulness, rigorous convergence analysis of such swarm intelligence algorithms is often missing. 

In this paper we provide a convergence analysis for the CBO method proposed in \cite{C-J-L-Z}. To be more specific, let $X_t^k = (x_t^{k,1}, \cdots, x_t^{k,d}) \in \bbr^d$ be the coordinate process of the $k$-th sample point at time $t$. Suppose that one looks for a global minimum $X^* \in \bbr^d$ for a given objective function $L$:
\[  \displaystyle  X^* \in \mbox{argmin}_{X \in \bbr^d} L(X), \]
where the objective function $L$ may be neither convex nor smooth enough. In this situation, gradient-based optimization methods such as the stochastic gradient descent method \cite{Be} can not be used as it is. In a recent work \cite{C-J-L-Z}, the authors proposed the following variant of the CBO algorithm introduced in \cite{C-C-T-T, P-T-T-M, T-P-B-S}: 
\begin{equation} \label{A-1}
\begin{cases}
\displaystyle dX^i_t = -\lambda (X^i_t - {\bar X}_t^*)dt  + \sigma  \sum_{l=1}^{d} (x^{i,l}_t - {\bar x}_t^{*,l}) dW_t^l e_l, \quad t > 0,~~ i = 1, \cdots, N, \\
\displaystyle {\bar X}_t^* =(x_t^{*,1}, \cdots, x_t^{*,d})  := \frac{\sum_{l=1}^{N} X^l_t e^{-\beta L(X^l_t)}}{\sum_{l=1}^{N} e^{-\beta L(X^l_t)}},
\end{cases}
\end{equation}
where $\lambda$ and $\sigma$ denote the drift rate and noise intensity, respectively, and  $\beta > 0$ is a positive constant corresponding  
to the reciprocal of temperature in statistical physics. Here $\{e_l \}$ is the standard orthonormal basis in $\bbr^d$. The one-dimensional Brownian motions $W_t^l$ are i.i.d. and satisfy the mean zero and covariance relations:
\[ \bbe[W_t^l]= 0 \quad \mbox{for $l = 1, \cdots, d$} \quad  \mbox{and} \quad  \bbe[W_t^{l_1} W_t^{l_2}] = \delta_{l_1 l_2} t, \quad 1 \leq  l_1, l_2 \leq d. \]

Note that system \eqref{A-1} is expected to have a local relaxation dynamics which means that $X^i_t$ relaxes toward the local weighted average ${\bar X}_t^*$, and this local weighted average tends to the global consensus state. This algorithm is an improvement upon those proposed in \cite{C-C-T-T, P-T-T-M} in that it is more suitable for higher-dimensional optimization problems, since its convergence conditions on system parameters are expected to be {\it independent} of the dimensionality $d$ due to the use of {\it componentwise} geometric Brownian motion.

We denote by $\Omega$ the underlying sample space for model \eqref{A-1}, and assume that the objective function $L$ is locally Lipschitz continuous. For the optimization algorithm \eqref{A-1}, we are interested in the following two questions: 
\begin{quote}
\begin{itemize}
\item
(Question I):~Does the $N$-state ensemble $\{ X_t^i \}$ exhibit a global consensus? i.e., for a.s. $\omega\in\Omega$, is there a global consensus state $X_\infty(\omega) \in \bbr^d$ such that 
\[ \lim_{t \to \infty} | X_t^i (\omega)- X_\infty (\omega) | = 0, \quad i = 1, \cdots, N, \]
where $| \cdot | := \| \cdot \|_{\ell^2}$ is the standard $\ell^2$-norm in $\bbr^d$ ? 

\vspace{0.1cm}

\item
(Question II):~If the answer to the first problem is positive, then under what condition  the consensus state is a good approximation of the global minimum $X^*$ of $L$? 
\end{itemize}
\end{quote}

In \cite{C-J-L-Z}, the authors conducted a convergence analysis via the Fokker-Planck equation, which can be deduced from \eqref{A-1} in the mean-field limit $N \to \infty$, and  showed that the global consensus state lies in a ${\mathcal O}(1/\beta)$-neighborhood of the global minimum under suitable assumptions on system parameters which are {\it independent} of dimension $d$ and initial data for $\beta >>1$. Since the Fokker-Planck equation is {\it not} the original model, thus this convergence result, although sheds lights on the convergence property of the original CBO model, it does not imply the latter.   The purpose of this article is to conduct the convergence analysis of \eqref{A-1} {\it directly}. 

Toward this goal, we first rewrite the relaxation term $X^i_t - {\bar X}_t^*$ \eqref{A-1} in consensus form: 
\begin{equation} \label{A-2}
\displaystyle dX^i_t = \lambda \sum_{k = 1}^{N} \psi^k_t (X^k_t - X^i_t) dt  + \sigma \sum_{k = 1}^{N} \sum_{l=1}^{d} \psi^k_t (x^{k,l}_t - x^{i,l}_t) dW^l_t e_l, \quad t > 0,
\end{equation}
where $\psi_t^k := \psi^k({\mathcal X}, t)$ is the communication weight function:
\begin{equation} \label{A-3}
\psi^k_t  :=  \frac{e^{-\beta L(X^k_t)}}{\sum_{l=1}^{N} e^{-\beta L(X^l_t)}}, \quad t \geq 0, \quad  k = 1, \cdots, N.
\end{equation}
Next, we return to the discrete-time dynamics associated with the continuous model \eqref{A-2}-\eqref{A-3}. We set a time-step $h := \Delta t$ and state at discrete time $t = nh$:
\[ X^i_n := X^i_{nh}, \qquad x_n^{i,l} := x_{nh}^{i,l}, \qquad  \psi^i_n := \psi^i_{nh}, \quad n = 0, 1, \cdots. \]
Then, the discrete consensus-based optimization model reads as follows:
\begin{equation} \label{A-4}
\begin{cases} 
\displaystyle X^i_{n +1}= X^i_n +   \lambda h \sum_{k = 1}^{N} \psi^k_n (X_n^k  - X_n^i) + \sigma \sqrt{h} \sum_{k = 1}^{N} \sum_{l=1}^{d} \psi_n^k (x_n^{k,l} - x_n^{i,l})  Z^l_n e_l, \quad n \geq 0, \\
\displaystyle \psi^k_n :=  \frac{e^{-\beta L(X_n^k)}}{\sum_{i=1}^{N} e^{-\beta L(X_n^i)}}, \quad \quad i = 1, \cdots, N,
\end{cases}
\end{equation}
where the random variables $\{ Z^l_n \}_{n, l}$ are i.i.d  standard normal distributions with $Z^l_n~\sim~{\mathcal N}(0, 1)$. \newline

We summarize the two main results of this paper now. First, we are concerned with the emergence of global consensus to the continuous and discrete models \eqref{A-1} and \eqref{A-4}, respectively. Since the analysis for the discrete model \eqref{A-4} is almost parallel to the analysis for the continuous one \eqref{A-1}, we mainly focus on the continuous model in what follows. To motivate the dynamic properties of \eqref{A-1} or \eqref{A-2}, we consider the deterministic counterpart:
\[ \frac{dX^i_t}{dt} = \lambda \sum_{k = 1}^{N} \psi^k_t (X^k_t - X^i_t). \]
In this case, it is easy to see that the time-dependent convex hull generated by $N$ sample points $X_t^1, \cdots, X_t^N$ in $\bbr^d$ is contractive (see Lemma \ref{L2.1}). Moreover, due to the special structure of the communication weight $\psi_t^k$:
\begin{align}
\begin{aligned} \label{A-5}
&(i)~\psi_t^k \geq 0, \quad 1 \leq k \leq N, \quad  \sum_{k=1}^{N} \psi_t^k = 1 \quad \mbox{for all $t \geq 0$}, \\
&(ii)~\mbox{Dependence only on the state of source sample point (independent of $i$ in \eqref{A-2})},
 \end{aligned}
 \end{align}
 the difference $X_t^i - X_t^j$ satisfies a system of ordinary differential equations:
\[ \frac{d}{dt}  (X^i_t - X^j_t) = -\lambda (X^i_t - X^j_t), \quad t > 0, \]
which has the analytic solution:
\[  (X^{i} - X^j) (t) = e^{-\lambda t} (X^{i}_0 - X^j_0), \quad t \geq 0. \]
On the other hand, maximal and minimal values of the component state of $X^i_t$ are monotone so that they converge to the same value. Hence, we can show that the state $X^i_t$ tends to the unique global consensus state $X_\infty$ independent of $i$ for any initial state (see Theorem \ref{T3.1}). 

Next, we return to the stochastic model with $\sigma > 0$. In this case, due to the white noise effect, the convex hull spanned by the state vector $X_t^i$ is not contractive any more. However, fortunately the $l$-th components of the relative state difference $x_t^{i,l} - x_t^{j,l}$ satisfies the geometric Brownian motion:
\[ d (x_t^{i,l} - x_t^{j,l}) = -\lambda (x_t^{i,l} - x_t^{j,l}) dt - \sigma (x_t^{i,l} - x_t^{j,l}) dW^l_t, \quad t > 0. \]
Then we use stochastic calculus to get the exact solution:
\[ x_t^{i,l} - x_t^{j,l} =  (x^{i,l}_0 - x^{j,l}_0) \exp \Big[  - \Big( \lambda  + \frac{\sigma^2}{2} \Big) t  + \sigma W^l_t \Big], \quad t \geq 0. \]
This yields the almost sure convergence of the relative state differences:
\[ \lim_{t \to \infty} |x^{i,l}_t - x^{j,l}_t| = 0, \quad \mbox{a.s.} \]
Similar analysis can be performed for the discrete algorithm \eqref{A-4} (see Theorem \ref{T3.3}).
Moreover, under the condition $2\lambda>\sigma^2$--which is {\it independent of the dimensionalizty} $d$,  we can show that there exists a random vector $X_\infty$ which is the almost-sure limit of the $X_t^i$'s (see Lemma \ref{L4.1}). Thus we answered the first posed question affirmatively for the continuous and discrete models. 

Second, we deal with Question II on whether the global consensus state $X_\infty$ is close to the global minimum of $L$ or not. Under suitable assumptions on system parameters $\lambda, \sigma$ and initial data such that  $X_0^i \sim X^{in}$~for some random variable $X^{in}$, we derive 
\[
\mathbb Ee^{-\beta L(X_\infty)}  \geq \varepsilon\mathbb E e^{-\beta L(X^{in})}, \quad \mbox{or} \quad -\frac{1}{\beta}\log \mathbb E e^{-\beta L(X_\infty)}\leq-\frac{1}{\beta}\log \mathbb E e^{-\beta L(X^{in})}-\frac{1}{\beta}\log\varepsilon.
\]
If the global minimizer $X^*$ of $L$ is contained in $\operatorname{supp}(\operatorname{law}(X^{in}))$, then Laplace's principle yields the desired estimate (see Theorem \ref{T4.1}):
\[
\operatorname{ess~inf}_{\omega\in\Omega} L(X^\infty(\omega))\leq L_m+ {\mathcal O}\Big(\frac{1}{\beta}\Big)\quad\mbox{for}\quad \omega \in \Omega,~~\beta \gg 1.
\]

Collective behaviors of agent-based models have been a hot topic in applied mathematics, control theory and related areas in recent years, see for example several survey articles \cite{A-B, A-B-F, C-H-L, P-R-K, V-Z} and related literature \cite{C-S, F-H-J, H-L, H-L-L,K-C-B-F-L, Ku1, Ku2, M-T, Pe}. \newline

The rest of this paper is organized as follows. In Section \ref{sec:2}, we provide preliminary materials on the deterministic analogs of the continuous and discrete algorithms \eqref{A-1} and \eqref{A-4}.  In Section \ref{sec:3}, we study the emergence of global consensus states for the continuous and discrete consensus models. In Section \ref{sec:4}, we prove the convergence of the global consensus state toward the global minimum of $L$ as $\beta \to \infty$ only for the continuous algorithm. The corresponding convergence analysis for the discrete model seems to be very challenging, thus  will be left for a future work. In Section \ref{sec:5}, we provide several numerical examples and compare them with our analytical results. Finally, Section \ref{sec:6} is devoted to a brief summary of our main results and discussion on some remaining problems to be investigated in future study. \newline

\noindent {\bf Notation}. For a random variable $Z \in \bbr$ on the probability space $(\Omega, {\mathcal F}, \bbp)$, we denote its mean by $\bbe Z$ or $\bbe[Z]$ interchangeably, and the function space ${\mathcal C}_b^2(\bbr^d)$ denotes the collection of all ${\mathcal C}^2(\bbr^d)$ functions with bounded derivatives up to second-order. 

\section{Preliminaries} \label{sec:2}
\setcounter{equation}{0}
In this section, we provide several preliminaries on the deterministic counterparts to \eqref{A-2}-\eqref{A-3} and \eqref{A-4} with $\sigma = 0$, respectively. \newline

Consider the continuous model:
\begin{equation} \label{B-1}
\begin{cases} 
\displaystyle \frac{dX^i_t}{dt} = \lambda \sum_{k = 1}^{N} \psi^k_t (X^k_t - X^i_t), \quad t > 0, \\
\displaystyle  \psi^k_t \geq 0, \quad \sum_{k=1}^{N} \psi^k_t = 1, \quad i = 1, \cdots, N,
\end{cases}
\end{equation}
and the discrete model:
\begin{equation} \label{B-2}
\begin{cases}
\displaystyle X^i_{n +1} = X^i_{n} +   \lambda h \sum_{k = 1}^{N} \psi^k_n (X^k_{n}- X^i_{n}), \quad n = 0, 1, \cdots, \\
\displaystyle \psi^k_n \geq 0, \quad \sum_{k=1}^{N} \psi^k_n = 1, \quad i = 1, \cdots, N.
\end{cases}
\end{equation}
Next, we study basic properties of the deterministic models \eqref{B-1} and \eqref{B-2} before moving to the stochastic ones.
\subsection{Deterministic continuous algorithm} \label{sec:2.1}
Let  ${\mathcal X}_t:= (X_t^1, \cdots, X^N_t) \in \bbr^{Nd}$ be a solution to \eqref{B-1}. For $t > 0$ and $l \in \{1, \cdots, d \}$, we introduce two extreme functions $ \underline{x}^l,~{\bar x}^l$ and component-diameter functional ${\mathcal D}^l({\mathcal X}_t)$:
\[  \underline{x}_t^l := \min_{1 \leq j \leq N} x_t^{j,l}, \qquad {\bar x}_t^l := \max_{1 \leq j \leq N} x_t^{j,l}, \qquad {\mathcal D}^l({\mathcal X}_t) := {\bar x}_t^l - \underline{x}_t^l. \]
Note that trajectories of $ \underline{x}^l$ and ${\bar x}^l$ are Lipschitz continuous, thus they are differentiable almost everywhere in $t\in(0, \infty)$. 
\begin{lemma} \label{L2.1}
Let $ {\mathcal X}_t = (X^1_t, \cdots, X^N_t)$ be a solution to \eqref{B-1} with the initial data ${\mathcal X}_0$.  Then, the following assertions hold.
\begin{enumerate}
\item
The extreme functions $\underline{x}^l$ and ${\bar x}^l$ are monotonically increasing and decreasing, respectively:
\[  \underline{x}_t^l \geq \underline{x}_s^l \quad \mbox{and} \quad {\bar x}_t^l \leq {\bar x}_s^l, \quad \mbox{for~~$t \geq s$}. \]
\item
The component diameter functional ${\mathcal D}^l({\mathcal X})$ is non-increasing in $t$:
\[ {\mathcal D}^l({\mathcal X}_t) \leq {\mathcal D}^l({\mathcal X}_{0}), \quad t \geq 0. \]
\end{enumerate}
\end{lemma}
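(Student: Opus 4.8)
The plan is to fix a component index $l \in \{1,\dots,d\}$ and work only with the scalar quantities $x_t^{j,l}$, since the dynamics \eqref{B-1} decouples componentwise. For part (1), I would argue that $\underline{x}^l$ is non-decreasing by examining an index $j = j(t)$ that attains the minimum, i.e.\ $x_t^{j,l} = \underline{x}_t^l$. At such an index, since every other $x_t^{k,l} \geq x_t^{j,l}$ and the weights $\psi_t^k$ are non-negative with $\sum_k \psi_t^k = 1$, the right-hand side of the $l$-th component of \eqref{B-1} satisfies
\[
\frac{d x_t^{j,l}}{dt} = \lambda \sum_{k=1}^N \psi_t^k (x_t^{k,l} - x_t^{j,l}) \geq 0,
\]
because each summand is non-negative. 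Since $\underline{x}^l$ is Lipschitz (hence absolutely continuous and differentiable a.e.), and at a.e.\ $t$ where it is differentiable its derivative coincides with $\frac{d}{dt} x_t^{j(t),l} \geq 0$ for a minimizing index, Rademacher/Danskin-type reasoning gives $\frac{d}{dt}\underline{x}_t^l \geq 0$ a.e., and integrating yields $\underline{x}_t^l \geq \underline{x}_s^l$ for $t \geq s$. The argument for ${\bar x}^l$ is symmetric: at a maximizing index the sum is $\leq 0$, so ${\bar x}^l$ is non-increasing.

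Part (2) is then immediate: ${\mathcal D}^l({\mathcal X}_t) = {\bar x}_t^l - \underline{x}_t^l$, and by part (1) the first term is non-increasing while the second is non-decreasing, so their difference is non-increasing in $t$; evaluating at $t = 0$ gives ${\mathcal D}^l({\mathcal X}_t) \leq {\mathcal D}^l({\mathcal X}_0)$.

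The main technical point to be careful about is the justification that the a.e.\ derivative of the extreme function equals the derivative of a single trajectory at a point where the minimum (resp.\ maximum) is attained. One clean way is to avoid differentiating $\underline{x}^l$ directly: for $t > s$, pick any index $j$ with $x_t^{j,l} = \underline{x}_t^l$; then
\[
\underline{x}_t^l = x_t^{j,l} = x_s^{j,l} + \int_s^t \lambda \sum_{k=1}^N \psi_\tau^k (x_\tau^{k,l} - x_\tau^{j,l})\, d\tau,
\]
but the integrand need not be sign-definite away from $t$, so this by itself is not enough — one really does need the differential inequality for the envelope. I would therefore invoke the standard lemma that if $f_1,\dots,f_N$ are $C^1$ (or Lipschitz) then $\underline{f}(t) := \min_j f_j(t)$ is Lipschitz with $\frac{d}{dt}\underline{f}(t) \in \{ f_j'(t) : x_t^{j,l} = \underline{f}(t)\}$ for a.e.\ $t$, and combine it with the pointwise sign estimate above. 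This is the only step with any subtlety; everything else is bookkeeping.
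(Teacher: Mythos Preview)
Your proposal is correct and follows essentially the same approach as the paper: both fix a component, pick an index attaining the extremum, observe that the right-hand side of the $l$-th component equation has a definite sign at that index by non-negativity of the weights, and deduce monotonicity of the envelope via its Lipschitz continuity. If anything, your treatment of the envelope-derivative step (invoking the standard Danskin-type lemma that the a.e.\ derivative of $\min_j f_j$ lies in the set of $f_j'$ at active indices) is more explicit than the paper's, which simply asserts that Lipschitz continuity together with the sign of $\frac{d}{dt}x_t^{i_t,l}$ yields the result.
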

\begin{proof} (i) Note that each component of \eqref{B-1} satisfies the same form of equations. Thus, it suffices to check one particular component. Consider the $l$-th component of system \eqref{B-1}:
\begin{equation} \label{B-3}
\frac{dx_t^{i,l}}{dt}  = \lambda \sum_{k=1}^{N} \psi_t^k (x_t^{k,l} - x_t^{i,l}).
\end{equation}
Now, we choose extreme indices $i_t$ and $j_t$ such that
\[ x_t^{i_t, l} = \underline{x}_t^l \quad \mbox{and} \quad x_t^{j_t, l}  = \bar{x}_t^l. \]

\noindent $\bullet$~Case A (Increasing property of $\underline{x}_t^l$): At a differentiable point $t$ of $\underline{x}_t^l$, it follows from \eqref{B-3} that 
\begin{equation} \label{B-4}
\frac{dx_t^{i_t, l}}{dt} = \lambda \sum_{k=1}^{N} \psi_t^k (x_t^{k,l} - x_t^{i_t, l}) \geq 0,
\end{equation}
where we used
\[  x_t^{k,l} - x_t^{i_t, l} \geq 0. \]
Then,  the Lipschitz continuity of $\underline{x}_t^l$ and \eqref{B-4} imply the non-increasing property of the lower envelope $\underline{x}_t^l$. \newline

\noindent $\bullet$~Case B (Decreasing property of $\bar{x}_t^l$): Similar to Case A, one has
\begin{equation*} \label{B-5}
\frac{dx^{j_t, l}}{dt} =  \lambda \sum_{k=1}^{N} \psi_t^k (x_t^{k,l} - x_t^{j_t, l}) \leq 0.
\end{equation*}
Finally, one combines Case A and Case B to see the non-increasing property of ${\mathcal D}^l({\mathcal X}_t)$: for $t \geq s$,
\[ {\mathcal D}^l({\mathcal X}_t) = {\bar x}_t^l - \underline{x}_t^l \leq {\bar x}_s^l - \underline{x}_s^l =  {\mathcal D}^l({\mathcal X}_s). \]
This yields the desired estimate.
\end{proof}
\begin{remark} \label{R2.1} Below, we comment two remarks for Lemma \ref{L2.1}. \newline

\noindent 1. The result of Lemma \ref{L2.1} implies that the convex hull of the set $\{X_t^i \} $ is non-increasing along the flow \eqref{B-1}. More precisely, set  
\[ C_t:= \mbox{convex hull} \{ X^1_t, \cdots, X^N_t \}. \]
Then, one has
\[ C_t \subset C_s, \quad \mbox{for $t \geq s$}. \]
\noindent 2. Using similar arguments, one can also show that the mixed norm $\| {\mathcal X}_t \|_{2,\infty}$:
\[| \| {\mathcal X}_t \|_{2,\infty} := \max_{1\leq i \leq N} \|X^i_t \|_{\ell^2} \]
is non-increasing in $t$:
\[  \| {\mathcal X}_t \|_{2,\infty} \leq \| {\mathcal X}_s \|_{2,\infty}, \quad \mbox{for $t \geq s$}. \]
\end{remark}
\subsection{Deterministic discrete algorithm} \label{sec:2.2}
Let  ${\mathcal X}_n:= (X^1_n, \cdots, X^N_n) \in \bbr^{Nd}$ be a state vector to system \eqref{B-2}, and  for $n \geq 0$ and $l \in \{1, \cdots, d \}$, set
\[ \underline{x}^l_n := \min_{1 \leq i \leq N} x^{i,l}_n \quad \mbox{and} \quad \overline{x}^l_n := \max_{1 \leq i \leq N} x^{i,l}_n. \]
Then, similar to Lemma \ref{L2.1}, one has the discrete counterpart for Lemma \ref{L2.1}.
\begin{lemma} \label{L2.2}
Let $ {\mathcal X}_n = (X^1_n, \cdots, X^N_n)$ be a state  to \eqref{B-2} with the initial data ${\mathcal X}_0$. Then, the following assertions hold.
\begin{enumerate}
\item
For each $k \in \{1, \cdots, N \}$, $ \underline{x}^l_n$ and $\overline{x}^l_n$ are monotonically increasing and decreasing, respectively:
\[  \underline{x}^l_n  \geq  \underline{x}^l_m \quad \mbox{and} \quad \overline{x}^l_n \leq \overline{x}^l_m, \quad \mbox{for~~$n \geq m$}. \]
\item
For $l \in \{1, \cdots, N \}$, the component diameter functional ${\mathcal D}^l({\mathcal X}_n)$ is non-increasing in $n$:
\[ {\mathcal D}^l({\mathcal X}_n )\leq {\mathcal D}^l({\mathcal X}_{0}), \quad n \geq 0. \]
\end{enumerate}
\end{lemma}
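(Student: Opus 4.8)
The plan is to follow the proof of Lemma~\ref{L2.1} line by line, but to replace the differential inequality at extreme points by an explicit one-step convex-combination identity. As in the continuous case it suffices to treat a single component $l$, since each component of \eqref{B-2} evolves independently according to
\[ x^{i,l}_{n+1} = x^{i,l}_n + \lambda h \sum_{k=1}^{N} \psi^k_n (x^{k,l}_n - x^{i,l}_n). \]
First I would rewrite this update, using $\sum_{k=1}^{N} \psi^k_n = 1$, as
\[ x^{i,l}_{n+1} = (1 - \lambda h)\, x^{i,l}_n + \lambda h \sum_{k=1}^{N} \psi^k_n x^{k,l}_n, \]
which exhibits $x^{i,l}_{n+1}$ as a weighted average of the values $\{ x^{k,l}_n : 1 \le k \le N \}$ with weights $(1 - \lambda h)\delta_{ik} + \lambda h \psi^k_n \ge 0$ that sum to $1$---provided the step size satisfies the restriction $\lambda h \le 1$, which I would add as a standing hypothesis (it is implicit in \eqref{A-4} being a sensible discretization of \eqref{A-2}).

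Under this restriction every new component value lies in the interval $[\underline{x}^l_n,\overline{x}^l_n]$. Taking the minimum and the maximum over $i$ then yields immediately
\[ \underline{x}^l_{n+1} = \min_{1 \le i \le N} x^{i,l}_{n+1} \ge \underline{x}^l_n, \qquad \overline{x}^l_{n+1} = \max_{1 \le i \le N} x^{i,l}_{n+1} \le \overline{x}^l_n, \]
and iterating over $n$ gives assertion~(1). Subtracting the two chains of inequalities gives
\[ {\mathcal D}^l({\mathcal X}_{n+1}) = \overline{x}^l_{n+1} - \underline{x}^l_{n+1} \le \overline{x}^l_n - \underline{x}^l_n = {\mathcal D}^l({\mathcal X}_n), \]
hence ${\mathcal D}^l({\mathcal X}_n) \le {\mathcal D}^l({\mathcal X}_0)$, which is assertion~(2).

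The only point genuinely requiring care---the analogue of the ``differentiable point'' bookkeeping in Lemma~\ref{L2.1}---is the sign condition on the coefficients, i.e.\ ensuring $1 - \lambda h \ge 0$; once that is in place the argument is purely algebraic, with no extremal-index selection or almost-everywhere differentiability needed, so the discrete case is in fact cleaner than the continuous one. I would also remark, in parallel with Remark~\ref{R2.1}, that the same convex-combination identity shows the discrete convex hull $C_n := \mbox{convex hull}\{ X^1_n, \cdots, X^N_n \}$ satisfies $C_{n+1} \subseteq C_n$ and that the mixed norm $\| {\mathcal X}_n \|_{2,\infty}$ is non-increasing in $n$, by convexity of $\| \cdot \|_{\ell^2}$.
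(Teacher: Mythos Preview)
Your proof is correct and follows essentially the same componentwise strategy as the paper's, with one presentational difference: the paper fixes the extremal index $i$ at time $n$ and uses $\psi^k_n(x^{k,l}_n - x^{i,l}_n)\ge 0$ to conclude $x^{i,l}_{n+1}\ge \underline{x}^l_n$ directly, whereas you rewrite the update as the convex combination $(1-\lambda h)\,x^{i,l}_n+\lambda h\sum_k\psi^k_n x^{k,l}_n$ and bound every coordinate simultaneously. Your formulation has the mild advantage of making the step-size restriction $\lambda h\le 1$ explicit---this is needed so that non-extremal coordinates cannot overshoot past $\underline{x}^l_n$ or $\overline{x}^l_n$, a point the paper's argument leaves implicit here (checking only the extremal index) but imposes later as $0<h<1/\lambda$ in Theorem~\ref{T3.3}. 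Your closing remarks on the convex hull and the mixed norm parallel Remark~\ref{R2.1} exactly.
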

\begin{proof} Basically, we use the same arguments as in Lemma \ref{L2.1}. \newline

\noindent (i)  Let $i$ and $j$ be two indices such that 
\[ \underline{x}^l_n := x^{i,l}_n \quad \mbox{and} \quad \overline{x}^l_n := x^{j,l}_n. \]
Then for such $i$, by $\psi_n^k (x^{k,l}_n - x^{i,l}_n) \geq 0$, one has
\[ x^{i,l}_{n+1} = x^{i,l}_n + \lambda h \sum_{k=1}^{N} \psi^k_n (x^{k,l}_n - x^{i,l}_n)  \geq x^{i,l}_n = \underline{x}^l_n. \]
This implies 
\begin{equation} \label{B-5}
 \underline{x}^l_{n+1} \geq \underline{x}^l_{n}.   
\end{equation} 
Hence $\underline{x}^l_n$ is non-decreasing in $n$.  Similarly, one has 
\[  x^{j,l}_{n+1} = x^{j,l}_{n} + \lambda h \sum_{k=1}^{N} \psi_n^k (x_n^{k,l} - x_n^{j,l})  \leq x_n^{j,l} = \overline{x}_n^l. \]
This yields
\begin{equation} \label{B-6}
\overline{x}^l_{n+1} \leq  \overline{x}_n^l. 
\end{equation}
\noindent (ii)~We combine \eqref{B-5} and \eqref{B-6} to get 
\[  {\mathcal D}^l({\mathcal X}_{n +1}) = \overline{x}^l_{n+1}  - \underline{x}^l_{n+1} \leq  \overline{x}^l_n -  \underline{x}^l_n =  {\mathcal D}^l({\mathcal X}_n). \]
\end{proof}
\begin{remark}
The result of Lemma \ref{L2.2} yields
\[  \min_{1 \leq k \leq N} x_0^{k,l} \leq x_n^{i,l} \leq  \max_{1 \leq k \leq N} x_0^{k,l}, \quad i = 1, \cdots, N,~~ n \geq 0. \]
\end{remark}

\section{Emergence of global consensus} \label{sec:3}
\setcounter{equation}{0}
In this section, we study the emergence of global consensus to systems \eqref{B-1} and \eqref{B-2} based on the following two steps:
\begin{itemize}
\item
Step A: We first derive an explicit formula for the state difference $X^i_t - X^j_t$, and then by using this formula, we show that the relative state differences tend to zero exponentially fast.

\vspace{0.1cm}
\item
Step B: For each component, we show that the maximal and minimal values are monotonically decreasing and increasing respectively over time so that as time tends to infinity, all extremal states tend to the same value. Then, together with the result of Step A, we can see that all states converge to the same global consensus state $X_\infty$ independent of particle number $i$. 
\end{itemize}

\subsection{Stochastic continuous algorithm} \label{sec:3.1}
Consider the continuous algorithms for $X^i_t$ and $X^j_t$: 
\begin{equation}
\begin{cases} \label{C-1}
\displaystyle dX^i_t = \lambda \sum_{k = 1}^{N} \psi^k_t (X^k_t - X^i_t) dt  + \sigma \sum_{k = 1}^{N} \sum_{l=1}^{d} \psi^k_t(x^{k,l}_t - x^{i,l}_t) dW^l_t e_l, \quad t > 0, \\
\displaystyle dX^j_t = \lambda \sum_{k = 1}^{N} \psi^k_t (X^k_t - X^j_t) dt  + \sigma \sum_{k = 1}^{N} \sum_{l=1}^{d} \psi^k_t(x^{k,l}_t - x^{j,l}_t) dW^l_t e_l, \quad t > 0, \\
\end{cases}
\end{equation}
subject to the initial data:
\begin{equation} \label{C-2}
X^i_t \Big|_{t = 0} = X^{i}_0,  \quad X^j_t \Big|_{t = 0} = X^{j}_0.
\end{equation}
First, we use the unit sum  condition \eqref{A-5} of $\psi^k_t$'s to get 
\begin{equation} \label{C-3}
\sum_{k = 1}^{N} \psi^k_t (X^k_t - X^i_t) - \sum_{k = 1}^{N} \psi^k_t (X^k_t - X^j_t)  = -\Big( \sum_{k = 1}^{N} \psi^k_t \Big) (X^i_t - X^j_t) = -(X^i_t - X^j_t).
\end{equation}
Note that the dependence on $\psi_t^i$ disappears on the R.H.S. of \eqref{C-3}. Thus, one uses \eqref{C-3} to see that $X^i_t - X^j_t$ satisfies 
\begin{equation} \label{C-4}
d (X^i_t - X^j_t) = -\lambda (X^i_t - X^j_t) dt - \sigma \sum_{l=1}^{d} (x^{i,l}_t - x^{j,l}_t) dW^l_t e_l.
\end{equation}
Next, we provide the emergence of global consensus to the deterministic model \eqref{C-4} with $\sigma = 0$.  For a given configuration process ${\mathcal X}_t$, we set
\[ {\mathcal D}({\mathcal X}_t) := \max_{1 \leq i, j \leq N} |X_t^i - X_t^j |. \]
\begin{theorem} \label{T3.1}
Let $\{X^i \}$ be a solution to \eqref{C-1} - \eqref{C-2} with $\sigma = 0$. Then, the following two assertions hold.
\begin{enumerate}
\item
The diameter ${\mathcal D}({\mathcal X})$ decays to zero exponentially fast:
\[ \mathcal D({\mathcal X}_t) \leq e^{- \lambda t} \mathcal D({\mathcal X}_0), \quad t \geq 0. \] 
\item
There exits a unique global consensus state $X_{\infty} = (x^1_\infty, \cdots, x^d_\infty) \in \bbr^d$ such that  for all $i = 1, \cdots, N$,
\[ \lim_{t \to \infty} X^i_t = X_\infty. \]
\end{enumerate}
\end{theorem}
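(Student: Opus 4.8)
The plan is to prove the two assertions in sequence, with assertion (1) doing the real work and feeding directly into assertion (2), following the Step A / Step B scheme announced at the start of Section \ref{sec:3}. For assertion (1), I would start from \eqref{C-4} specialized to $\sigma = 0$,
\[
\frac{d}{dt}(X^i_t - X^j_t) = -\lambda (X^i_t - X^j_t), \qquad t > 0,
\]
a decoupled linear system whose unique solution is $X^i_t - X^j_t = e^{-\lambda t}(X^i_0 - X^j_0)$. Taking $\ell^2$-norms gives $|X^i_t - X^j_t| = e^{-\lambda t}|X^i_0 - X^j_0|$, and since the exponential prefactor is independent of the pair $(i,j)$, maximizing over $1 \le i,j \le N$ yields $\mathcal D(\mathcal X_t) = e^{-\lambda t}\mathcal D(\mathcal X_0)$, which is the claimed bound (in fact an equality).

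For assertion (2) I would combine assertion (1) with Lemma \ref{L2.1}. Fix a component $l \in \{1,\dots,d\}$. By Lemma \ref{L2.1}, $\underline{x}^l_t$ is nondecreasing and $\bar{x}^l_t$ is nonincreasing, and both stay in the bounded interval $[\underline{x}^l_0, \bar{x}^l_0]$; hence each converges as $t \to \infty$, say $\underline{x}^l_t \uparrow a^l$ and $\bar{x}^l_t \downarrow b^l$ with $a^l \le b^l$. On the other hand,
\[
0 \le \mathcal D^l(\mathcal X_t) = \bar{x}^l_t - \underline{x}^l_t \le \mathcal D(\mathcal X_t) \le e^{-\lambda t}\mathcal D(\mathcal X_0) \xrightarrow[t\to\infty]{} 0,
\]
so $a^l = b^l =: x^l_\infty$. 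Since $\underline{x}^l_t \le x^{i,l}_t \le \bar{x}^l_t$ for every $i$, the squeeze theorem gives $x^{i,l}_t \to x^l_\infty$ for all $i$. Assembling the $d$ components, $X^i_t \to X_\infty := (x^1_\infty,\dots,x^d_\infty)$ for every $i$, and this limit is manifestly unique, being pinned down by the squeeze. An alternative route to the existence of the limit, bypassing Lemma \ref{L2.1}, is to note that assertion (1) gives $\big|\tfrac{d}{dt}X^i_t\big| \le \lambda \sum_{k} \psi^k_t |X^k_t - X^i_t| \le \lambda\,\mathcal D(\mathcal X_t) \le \lambda e^{-\lambda t}\mathcal D(\mathcal X_0)$, which is integrable on $[0,\infty)$; hence $t \mapsto X^i_t$ is Cauchy at infinity and converges to some $X^i_\infty$, while assertion (1) forces $X^i_\infty = X^j_\infty$ for all $i,j$.

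I should be candid that there is no serious obstacle here: this is a warm-up deterministic statement, and once \eqref{C-4} is in hand both parts are short. The only point that needs a moment's care is the logical structure of part (2): vanishing of the relative differences (Step A) does \emph{not} by itself produce a limit for the individual trajectories, so one genuinely needs either the monotone boundedness of the extreme functions from Lemma \ref{L2.1} (Step B) or the integrability estimate above to conclude convergence. The substantive difficulties arise only in the stochastic analogue treated in the next subsection, where the convex hull is no longer contractive and one must instead exploit the geometric-Brownian-motion structure of the componentwise differences $x^{i,l}_t - x^{j,l}_t$ together with the condition $2\lambda > \sigma^2$.
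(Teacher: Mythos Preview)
Your proof is correct and follows essentially the same approach as the paper: part (i) is obtained by solving the decoupled linear ODE from \eqref{C-4} with $\sigma=0$ and maximizing over pairs, and part (ii) combines the monotone boundedness of the extreme functions from Lemma~\ref{L2.1} with the diameter decay from (i). Your alternative Cauchy-at-infinity argument for (ii) is a nice addition but not needed, and your observation that (i) is actually an equality is sharper than the paper's stated inequality.
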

\begin{proof}  (i)~It follows from \eqref{C-4} that 
\[ \frac{d}{dt}  (X^i_t - X^j_t) = -\lambda (X^i_t - X^j_t), \quad t > 0. \]
This yields 
\[  |X_t^{i} - X_t^j| = e^{-\lambda t} |X^{i}_0 - X^j_0|, \quad t \geq 0. \]
Again, by taking maximum over all the indices $i$ and $j$, one gets the desired exponential decay of ${\mathcal D}({\mathcal X}_t)$. \newline

\noindent (ii)~For each $l \in \{1, \cdots, d\}$, we claim that the extreme functions ${\bar x}_t^l$ and $\underline{x}_t^l$ converge to the same value $x^l_\infty$ so that all the other state $x^{i,l}$ should converge to the same value $x^{l}_\infty$, because the component diameter  shrinks to zero asymptotically. In the course of proof of (i), we showed that ${\bar x}_t^l$ is non-increasing and bounded by ${\bar x}_0^l$. Thus, it should converge to ${\bar x}^l_\infty$. Similarly, $\underline{x}^l$ should converge to $\underline{x}^l_\infty$. Then, it is easy to see that ${\bar x}^l_\infty = \underline{x} ^l_\infty$ due to the exponential decay of the component diameter.
\end{proof}
\begin{remark} Note that the explicit form of $\psi_t^i$ does not appear in the decay estimate of the diameter.
\end{remark}

\bigskip

On the other hand, it follows from \eqref{C-4} that $x_t^{ij, l} := x_t^{i,l} - x_t^{j,l}$ satisfies 
\begin{equation} \label{C-5}
\begin{cases}
d x^{ij,l}_t  = -\lambda x^{ij,l}_t dt - \sigma x^{ij,l}_t dW^l_t, \quad t > 0, \\
x^{ij,l}_t \Big|_{t = 0} = x^i_0 - x^j_0.
\end{cases}
\end{equation}
Now, we apply Ito's formula for $\ln x^{ij,l}_t$ using \eqref{C-5} to see
\begin{equation} \label{C-6}
d \ln x^{ij,l}_t = \frac{dx^{ij,l}_t}{x^{ij,l}_t} -\frac{1}{2 (x^{ij,l}_t)^2} dx^{ij,l}_t \cdot dx^{ij,l}_t =  
- \Big( \lambda + \frac{\sigma^2}{2}  \Big) dt + \sigma dW^l_t.
\end{equation}
Integrating the above relation \eqref{C-6} gives
\begin{equation} \label{C-11}
x^{ij,l}_t = x^{ij,l}_0 \exp \Big[  - \Big( \lambda  + \frac{\sigma^2}{2} \Big) t  + \sigma W^l_t \Big], \quad t \geq 0. 
\end{equation}
Then, the explicit formula \eqref{C-11} yields the following result. 
\begin{theorem} \label{T3.2}
Let $\{ X^i_t \}$ be a solution process of \eqref{B-1}. Then, for $i \not = j  = 1, \cdots N$ and $k \in \{1, \cdots, d \}$, 
\[ \lim_{t \to \infty} |x^{i,k}_t - x^{j,k}_t| = 0, \quad \mbox{a.s.} \quad \mbox{and} \quad \lim_{t \to \infty} {\mathbb P} \Big( |x^{i,k}_t - x^{j,k}_t|^2> \varepsilon \Big) = 0, \quad \mbox{for any $\varepsilon > 0$}.
\]
\end{theorem}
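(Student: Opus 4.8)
The plan is to read off both claims directly from the closed-form solution \eqref{C-11}. If $x^{ij,l}_0 = 0$, then by uniqueness $x^{ij,l}_t \equiv 0$ and both assertions are trivial, so assume $x^{ij,l}_0 \neq 0$. Taking absolute values and logarithms in \eqref{C-11} gives
\[
\frac{1}{t}\ln \big|x^{ij,l}_t\big| = \frac{\ln \big|x^{ij,l}_0\big|}{t} - \Big(\lambda + \frac{\sigma^2}{2}\Big) + \sigma\,\frac{W^l_t}{t}, \qquad t > 0.
\]
First I would invoke the strong law of large numbers for Brownian motion, $\lim_{t\to\infty} W^l_t/t = 0$ almost surely, so that the right-hand side converges almost surely to $-\big(\lambda + \tfrac{\sigma^2}{2}\big) < 0$ (recall $\lambda > 0$). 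Hence $\ln|x^{ij,l}_t| \to -\infty$, and therefore $|x^{ij,l}_t| \to 0$ almost surely; specializing to $l = k$ (and, if one wishes, taking a maximum over the finitely many pairs $i \neq j$) gives the first assertion.

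For the convergence in probability I would simply use that almost-sure convergence implies convergence in probability: since $|x^{i,k}_t - x^{j,k}_t|^2 \to 0$ almost surely, we get $\mathbb{P}\big(|x^{i,k}_t - x^{j,k}_t|^2 > \varepsilon\big) \to 0$ for every $\varepsilon > 0$. A self-contained quantitative alternative is also available: because $W^l_t \sim \mathcal{N}(0,t)$, the exponent $\sigma W^l_t - (\lambda + \tfrac{\sigma^2}{2})t$ is Gaussian with mean $-(\lambda + \tfrac{\sigma^2}{2})t \to -\infty$ and variance $\sigma^2 t$, and a direct Gaussian tail bound then forces $\mathbb{P}(|x^{ij,l}_t|^2 > \varepsilon) \to 0$.

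I do not anticipate a genuine obstacle. The only points worth a careful word are the degenerate initial datum $x^{ij,l}_0 = 0$ and the precise invocation of the law of large numbers for Brownian motion — one may equally cite the law of the iterated logarithm, which yields the sharper $|W^l_t| = O(\sqrt{t\log\log t})$ and amply dominates the linear drift $(\lambda + \tfrac{\sigma^2}{2})t$. Note that this argument needs no restriction on the parameters beyond $\lambda > 0$, in contrast with an $L^2$-based estimate, which would require $2\lambda > \sigma^2$; everything else is an immediate consequence of the explicit representation \eqref{C-11} established just above.
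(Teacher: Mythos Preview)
Your proof is correct and follows essentially the same route as the paper: both read off the almost-sure convergence directly from the explicit formula \eqref{C-11} by noting that the linear drift $-(\lambda+\tfrac{\sigma^2}{2})t$ dominates the Brownian fluctuation (the paper invokes the law of the iterated logarithm, you invoke the strong law $W^l_t/t\to0$, and you even mention the LIL as an alternative), and both deduce the convergence in probability as an immediate corollary of almost-sure convergence.
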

\begin{proof}
The proof is essentially the same as in Theorem 3.1 of \cite{A-H}. However, for reader's convenience, we briefly sketch the proof here. \newline

\noindent (i)~Recall the law of iterated logrithm of the Brownian motion:
\begin{equation*} \label{C-12}
 \limsup_{t \to \infty} \frac{|W^k_t|}{\sqrt{2t \log \log t}} = 1, \quad \mbox{a.s.}, 
\end{equation*}
and note that for $t \gg 1$, the linear negative term $-\Big( \lambda  + \frac{\sigma^2}{2} \Big) t $  in $t$ is certainly dominant compared to the Brownian term $\sigma W^k_t$ which grows with a rate of at most $t^{\frac{1}{2} + }$. Thus, the trajectory $|x^{i,k}_t - x^{j,k}_t|$ tends to zero almost surely as $t \to \infty$.  \newline

\noindent (ii)~Since a.s. convergence implies convergence in probability, the result follows from (i).
\end{proof}

\subsection{Stochastic discrete algorithm} \label{sec:3.2} Consider the discrete algorithms: for $i, j = 1, \cdots, N$, 

\begin{equation} \label{C-13}
\begin{cases}
\displaystyle X^i_{n +1} = X^i_n  + \lambda h \sum_{k = 1}^{N} \psi_n^k (X_n^k - X_n^i) + \sigma \sqrt{h} \sum_{k = 1}^{N} \sum_{l=1}^{d} \psi_n^k (x_n^{k,l} - x_n^{i,l})  Z^l_n e_l,  \\
\displaystyle X^j_{n +1} = X^j_n  + \lambda h \sum_{k = 1}^{N} \psi_n^k (X_n^k - X_n^j) + \sigma \sqrt{h} \sum_{k = 1}^{N} \sum_{l=1}^{d} \psi_n^k (x_n^{k,l} - X_n^{j,l})  Z^l_n e_l,
\end{cases}
\end{equation}
subject to the initial data:
\begin{equation} \label{C-14}
X_n^i \Big|_{n =0} = X^{i}_0,  \quad X_n^j \Big|_{n= 0} = X^{j}_0.
\end{equation}
Here the random variables $\{ Z^l_n \}$ are i.i.d. and satisfy $Z^l_n \sim {\mathcal N}(0, 1)$. \newline

Note that 
\[ \sum_{k = 1}^{N} \psi_n^k (X_n^k - X_n^i)  - \sum_{k = 1}^{N} \psi_n^k (X_n^k - X_n^j) =  - (X_n^i - X_n^j).   \]
Thus, one has
\begin{equation} \label{C-15}
x^{i,k}_{n +1}  - x^{j,k}_{n +1} = \Big(1-\lambda h - \sigma \sqrt{h}  Z^k_n  \Big) (x^{i,k}_n  - x_n^{j,k}). 
\end{equation}
Based on the above explicit recursive relation, we have the following emergent dynamics.
\begin{theorem} \label{T3.3}
Suppose that parameters satisfy
\[ \sigma = 0, \quad \lambda >0 \quad \mbox{and} \quad 0<h<\frac{1}{\lambda}. \]
Then, for any solution $\{X_n^i \}$ to \eqref{C-13} - \eqref{C-14}, the following two assertions hold.
\begin{enumerate}
\item
The diameter ${\mathcal D}({\mathcal X}_n)$ decays to zero exponentially fast:
\[ \mathcal D({\mathcal X}_n) \leq e^{- \lambda nh} \mathcal D({\mathcal X}_0), \quad n = 0, 1, \cdots.  \] 
\item
There exits a unique global consensus state $X_{\infty} = (x^1_\infty, \cdots, x^d_\infty) \in \bbr^d$ independent of $i$ such that  for all $i = 1, \cdots, N$,
\[ \lim_{n \to \infty} X^i_n = X_\infty. \]
\end{enumerate}
\end{theorem}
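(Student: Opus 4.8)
The plan is to mirror the two-step scheme already carried out for the continuous model in Theorem \ref{T3.1}, with the linear recursion \eqref{C-15} playing the role there of the linear ODE.

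\textbf{Step A: decay of the diameter.} Setting $\sigma = 0$ in \eqref{C-15}, I would first observe that for every pair $i,j$ and every component $k$,
\[ x^{i,k}_{n+1} - x^{j,k}_{n+1} = (1-\lambda h)(x^{i,k}_n - x^{j,k}_n), \]
so that by iteration $x^{i,k}_n - x^{j,k}_n = (1-\lambda h)^n(x^{i,k}_0 - x^{j,k}_0)$ and, collecting the $d$ components,
\[ X^i_n - X^j_n = (1-\lambda h)^n(X^i_0 - X^j_0), \qquad n \ge 0. \]
Since $0 < h < 1/\lambda$ we have $0 \le 1 - \lambda h < 1$, and the elementary bound $1 - x \le e^{-x}$ applied at $x = \lambda h$ gives $(1-\lambda h)^n \le e^{-\lambda n h}$. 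Taking $\ell^2$-norms and then the maximum over $i,j$ yields
\[ \mathcal D(\mathcal X_n) = (1-\lambda h)^n \mathcal D(\mathcal X_0) \le e^{-\lambda n h}\, \mathcal D(\mathcal X_0), \]
which is assertion (1).

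\textbf{Step B: convergence to a common limit.} From the evolution equation for $X^i_n$ and the bound just obtained,
\[ |X^i_{n+1} - X^i_n| = \Big| \lambda h \sum_{k=1}^{N} \psi^k_n (X^k_n - X^i_n) \Big| \le \lambda h\, \mathcal D(\mathcal X_n) \le \lambda h\, e^{-\lambda n h}\, \mathcal D(\mathcal X_0). \]
The right-hand side is summable in $n$, so $\{X^i_n\}_n$ is Cauchy in $\bbr^d$ and converges to some $X^i_\infty$; by Step A, $|X^i_\infty - X^j_\infty| = \lim_{n\to\infty}|X^i_n - X^j_n| = 0$, so all these limits coincide and I may call the common value $X_\infty = (x^1_\infty, \cdots, x^d_\infty)$, which is plainly the unique consensus state. (Alternatively, as in Theorem \ref{T3.1}(ii): by Lemma \ref{L2.2} each extreme function $\underline x^l_n$, $\bar x^l_n$ is monotone and bounded, hence convergent, and $\bar x^l_n - \underline x^l_n \le \mathcal D(\mathcal X_n) \to 0$ forces the two limits to agree, trapping every $x^{i,l}_n$ between them.)

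\textbf{Anticipated difficulty.} There is no real analytic obstacle here: the recursion \eqref{C-15} linearizes the dynamics completely, and the remainder is a geometric-series estimate identical in spirit to the continuous case. The one place that needs a little attention is the step-size condition $0 < h < 1/\lambda$: it is exactly what makes $1 - \lambda h$ a nonnegative contraction factor, and hence what legitimizes both the clean exponential bound $e^{-\lambda n h}$ and the monotonicity of the extreme functions invoked from Lemma \ref{L2.2}; for $1/\lambda \le h < 2/\lambda$ the relative differences still vanish, but neither statement holds verbatim.
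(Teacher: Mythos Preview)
Your proof is correct and tracks the paper's argument closely: Step~A is identical to the paper's proof of (i), deriving $X^i_n - X^j_n = (1-\lambda h)^n(X^i_0 - X^j_0)$ from \eqref{C-15} and bounding $(1-\lambda h)^n \le e^{-\lambda nh}$. For (ii) the paper simply invokes the monotone extreme-function argument from Theorem~\ref{T3.1}(ii) (your ``alternatively'' clause), whereas your primary route is a direct Cauchy-sequence estimate on the increments $|X^i_{n+1}-X^i_n|$; both are equally elementary and either suffices.
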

\begin{proof}
(i)~It follows from \eqref{C-15} that 
\begin{align*}
\begin{aligned} \label{C-16}
X^i_{n+1} -X^j_{n+1} &= X^i_n - X^j_n -\lambda h \Big( \sum_{l=1}^{N} \psi_n^l \Big) (X^i_n - X^j_n) = \Big(1 -\lambda h \Big) (X^i_n - X^j_n).
\end{aligned}
\end{align*}
This implies
\[ X_n^i -X_n^j  = \Big(1 -\lambda h \Big)^n (X_0^i - X_0^j). \]
Thus, we have the desired estimate:
\[  |X_n^i -X_n^j|  = \Big(1 -\lambda h \Big)^n |X_0^i - X_0^j| \leq e^{-\lambda h n} |X_0^i - X_0^j|, \quad n = 0, 1, \cdots. \]

\noindent (ii)~We use the same argument in Theorem \ref{T3.1} to get the desired convergence.
\end{proof}

Now, we return to the stochastic version with $\sigma > 0$ in the following theorem. 

\begin{theorem} \label{T3.4}
Let $\{X_n^i \}$ be a solution to \eqref{C-13} - \eqref{C-14}. Then, we have the following stochastic consensus:
\begin{enumerate}
\item
(Weak stochastic consensus I):~Suppose that $h$ and systems parameters satisfy 
\[ \sigma >0, \quad  \lambda > 0 \quad \mbox{and} \quad  0 <  h < \frac{1}{\lambda}.  \]
Then, for $i, j = 1, \cdots, N$, one has
\[ \Big|\mathbb E[X_n^i-X_n^j] \Big|  \leq e^{-\lambda nh} \Big| \mathbb E[X_0^i-X_0^j] \Big|. \]
\item
(Weak stochastic consensus II):~Suppose that $h$ and systems parameters satisfy 
\[ \sigma > 0, \quad   2\lambda > \sigma^2 \quad \mbox{and} \quad 0 <  h < \frac{2\lambda - \sigma^2}{\lambda}.  \]
Then, for $i, j = 1, \cdots, N$, one has the following exponential decay estimate:
\[  \mathbb E|X_n^i-X_n^j|^2 \leq e^{-nhm} \bbe |X^i_0 - X^j_0|^2, \]
where $m = m(\lambda,h,\sigma)$ is defined as follows:
\[  m(\lambda, h, \sigma) := 2\lambda - \lambda^2 h -\sigma^2.   \]
\item
(Strong stochastic consensus):~Suppose that $h$ and system parameters satisfy
\[ \sigma >0, \quad  2 \lambda > \sigma^2 \quad \mbox{and} \quad 0 <  h < \frac{2\lambda - \sigma^2}{\lambda}. \]
Then, for $i, j = 1, \cdots, N$, one has
\[
 |X^i_n - X^j_n|  \leq  e^{-n  Y_n} |X^i_0 - X^j_0|, \quad \mbox{a.s.}~\omega \in \Omega,  
\]
where $Y_n$ is a random variable satisfying 
\[ \lim_{n \to \infty} Y_n(\omega) = \frac{hm}{2} := \frac{h}{2} (2\lambda-\lambda^2 h-\sigma^2) > 0, \quad \mbox{a.e.}~\omega \in \Omega. \]
\end{enumerate}
\end{theorem}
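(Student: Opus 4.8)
The three assertions all reduce to the scalar, component-wise recursion \eqref{C-15}, whose decisive feature is that the communication weights $\psi^k_n$ have already cancelled: the $k$-th component $u^{(k)}_n := x^{i,k}_n - x^{j,k}_n$ obeys the \emph{autonomous} random linear recursion $u^{(k)}_{n+1} = \big(1 - \lambda h - \sigma\sqrt{h}\,Z^k_n\big)u^{(k)}_n$, driven by the i.i.d. sequence $\{Z^k_m\}_{m\ge 0}$, which is independent of the initial data. Iterating yields the closed form $u^{(k)}_n = u^{(k)}_0\prod_{m=0}^{n-1}\big(1-\lambda h-\sigma\sqrt{h}\,Z^k_m\big)$, and $|X^i_n - X^j_n|^2 = \sum_{k=1}^d |u^{(k)}_n|^2$. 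Parts (1), (2) and (3) then amount to controlling, respectively, the first moment, the second moment, and the almost-sure growth rate of these products.

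For (1), since $Z^k_n$ has mean zero and is independent of $u^{(k)}_n$, taking expectations gives $\bbe[u^{(k)}_{n+1}] = (1-\lambda h)\bbe[u^{(k)}_n]$, hence $\bbe[u^{(k)}_n] = (1-\lambda h)^n\bbe[u^{(k)}_0]$. Under $0<h<1/\lambda$ we have $0<1-\lambda h<1$ and $(1-\lambda h)^n\le e^{-\lambda h n}$ by $1-x\le e^{-x}$; summing the squared components gives $\big|\bbe[X^i_n-X^j_n]\big|^2 = (1-\lambda h)^{2n}\big|\bbe[X^i_0-X^j_0]\big|^2$, from which the claim follows. For (2), independence of $Z^k_n$ from $u^{(k)}_n$ gives $\bbe[|u^{(k)}_{n+1}|^2] = \bbe\big[(1-\lambda h-\sigma\sqrt{h}\,Z^k_n)^2\big]\,\bbe[|u^{(k)}_n|^2]$, and using $\bbe Z^k_n = 0$, $\bbe (Z^k_n)^2 = 1$ one computes $\bbe[(1-\lambda h-\sigma\sqrt{h}\,Z^k_n)^2] = (1-\lambda h)^2 + \sigma^2 h = 1 - hm$ with $m = 2\lambda-\lambda^2 h-\sigma^2$. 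Here $1-hm = (1-\lambda h)^2+\sigma^2 h>0$ always, and the stated parameter restriction forces $m>0$, hence $0<hm<1$; iterating and summing over $k$ then gives $\bbe|X^i_n-X^j_n|^2 = (1-hm)^n\bbe|X^i_0-X^j_0|^2 \le e^{-nhm}\bbe|X^i_0-X^j_0|^2$.

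For (3), write $|u^{(k)}_n| = |u^{(k)}_0|\prod_{m=0}^{n-1}|1-\lambda h-\sigma\sqrt{h}\,Z^k_m|$ and take logarithms. The summands $\log|1-\lambda h-\sigma\sqrt{h}\,Z^k_m|$ are i.i.d.\ and integrable (the only singularity is the logarithmic one at the zero of $z\mapsto 1-\lambda h-\sigma\sqrt{h}\,z$, which is harmless against the Gaussian density, while the tails grow only like $\log|z|$), so by the strong law of large numbers $-\tfrac1n\sum_{m=0}^{n-1}\log|1-\lambda h-\sigma\sqrt{h}\,Z^k_m| \to \gamma := -\bbe\big[\log|1-\lambda h-\sigma\sqrt{h}\,Z|\big]$ almost surely, for each $k$, with $Z\sim\mathcal N(0,1)$. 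By Jensen's inequality and the second-moment identity from (2), $-\gamma = \tfrac12\bbe\big[\log(1-\lambda h-\sigma\sqrt{h}\,Z)^2\big] \le \tfrac12\log\bbe\big[(1-\lambda h-\sigma\sqrt{h}\,Z)^2\big] = \tfrac12\log(1-hm) \le -\tfrac{hm}{2}$, so $\gamma\ge hm/2>0$. Combining the $d$ component-wise limits through a largest-exponent-wins (log-sum-exp) estimate for $|X^i_n-X^j_n|^2 = \sum_k |u^{(k)}_0|^2 e^{-2n Y^{(k)}_n}$, where $Y^{(k)}_n := -\tfrac1n\sum_{m<n}\log|1-\lambda h-\sigma\sqrt{h}\,Z^k_m|\to\gamma$, one sets $Y_n := \min\big\{\tfrac{hm}{2},\ -\tfrac1n\log\big(|X^i_n-X^j_n|/|X^i_0-X^j_0|\big)\big\}$ (and $Y_n := \tfrac{hm}{2}$ when $X^i_0 = X^j_0$); then $|X^i_n-X^j_n|\le e^{-nY_n}|X^i_0-X^j_0|$ holds a.s.\ and $Y_n\to hm/2$ a.s.

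The routine parts are (1) and (2), pure moment bookkeeping on a scalar recursion. The main obstacle is part (3): one must justify applicability of the strong law of large numbers (integrability of $\log|1-\lambda h-\sigma\sqrt{h}\,Z|$), pass cleanly from the $d$ component-wise almost-sure rates to a single exponent governing the $\ell^2$-norm $|X^i_n-X^j_n|$, and reconcile the \emph{sharp} almost-sure rate $\gamma$ with the claimed limit $hm/2$: by strict Jensen one in fact has $\gamma>hm/2$, so $Y_n$ must be taken as a truncated (minorized) version of the empirical rate rather than the empirical rate itself. Once these points are handled, the conclusion is immediate.
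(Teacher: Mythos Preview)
Your proof is correct. Parts (1) and (2) are essentially identical to the paper's argument: componentwise iteration of \eqref{C-15}, then first and second moments of the i.i.d.\ factors, with $\bbe[(1-\lambda h-\sigma\sqrt{h}Z)^2]=(1-\lambda h)^2+\sigma^2 h=1-hm$.

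Part (3) is where you diverge. The paper does \emph{not} take logarithms; instead it uses the elementary bound $(1-\Delta_\ell)^2=1-\Delta_\ell(2-\Delta_\ell)\le e^{-\Delta_\ell(2-\Delta_\ell)}$, so that
\[
|X^i_n-X^j_n|^2\le \exp\Big[-\sum_{\ell=0}^{n-1}\Delta_\ell(2-\Delta_\ell)\Big]|X^i_0-X^j_0|^2,
\]
and then applies the strong law of large numbers to the \emph{polynomial} summands $\Delta_\ell(2-\Delta_\ell)$, whose mean is exactly $hm$. Thus the paper's $Y_n$ is (up to a factor $1/2$) the empirical average $\tfrac{1}{n}\sum_{\ell<n}\Delta_\ell(2-\Delta_\ell)$, and the limit $hm/2$ appears directly, with no truncation step. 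The integrability needed for SLLN is automatic because the summand is a quadratic in a Gaussian.

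Your route---SLLN on $\log|1-\lambda h-\sigma\sqrt{h}Z|$, identify the Lyapunov exponent $\gamma$, bound $\gamma\ge hm/2$ via Jensen, then minorize to hit $hm/2$---is more work but actually proves a \emph{sharper} almost-sure rate ($\gamma>hm/2$), and you then have to artificially cap $Y_n$ at $hm/2$ to match the statement as written. You also treat the passage from the $d$ componentwise rates to the $\ell^2$-norm more explicitly than the paper, which writes the recursion in a notationally scalar form. Both approaches are valid; the paper's buys directness and avoids the log-integrability check, yours buys the optimal exponent at the cost of an extra reconciliation step.
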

\begin{proof}
(i)~It follows from \eqref{C-15} that 
\begin{align}
\begin{aligned} \label{D-0-9}
&X^i_{n +1} - X^j_{n+1} \\
& \hspace{1cm} = X^i_n - X^j_n -\lambda h ( X^i_n - X^j_n) + \sigma \sqrt{h} (X^i_n - X^j_n) Z_n = \Big(1 -\lambda h + \sigma \sqrt{h} Z_n  \Big) (X^i_n - X^j_n).
\end{aligned}
\end{align}
Iterating the above recursive relation \eqref{D-0-9} gives
\begin{equation} \label{D-0-10}
X^i_n - X^j_n = \Pi_{\ell = 0}^{n-1} \Big(1- \underbrace{(\lambda h - \sigma \sqrt{h} Z_\ell)}_{=: \Delta_\ell}  \Big) (X^i_0 - X^j_0).
\end{equation}
Finally, we take expectation and absolute value on both sides of \eqref{D-0-10} using the independence of $Z_\ell$ and $X^i_0 - X^j_0$ to get
\[ \Big|\mathbb E[X_n^i-X_n^j] \Big| =(1-\lambda h)^n \Big| \mathbb E[X_0^i-X_0^j] \Big|\leq e^{-\lambda nh} \Big| \mathbb E[X_0^i-X_0^j] \Big|. \]
 \newline
\noindent (ii)~We take the absolute value of \eqref{D-0-10} and square of it to find
\begin{equation} \label{D-0-11}
|X^i_n - X^j_n|^2 = \Pi_{\ell = 0}^{n-1} (1- \Delta_\ell )^2 |X^i_0 - X^j_0|^2.
\end{equation}
Taking expectation of the above relation and using the independence of $\Delta_\ell$ and $|X^i_0 - X^j_0|$, one gets
\begin{equation} \label{D-0-12}
\bbe |X^i_n - X^j_n|^2  =  \Pi_{\ell = 0}^{n-1} \bbe[(1- \Delta_\ell )^2] \times \bbe[|X^i_0 - X^j_0|^2].  
\end{equation}
On the other hand, since $\{\Delta_\ell \}$ are i.i.d. and for each $\ell = 0, \cdots, n-1$,  one has 
\begin{align}
\begin{aligned}  \label{D-0-13}
\bbe[(1- \Delta_\ell )^2] &= 1-{\mathbb E} [2\Delta_\ell - \Delta_\ell^2 ] \\
&=1- {\mathbb E} \Big(2 \lambda h - 2\sigma \sqrt{h} Z_\ell -  (\lambda h)^2 + 2\sigma \lambda h^{\frac{3}{2}} Z_\ell - \sigma^2 h Z_\ell^2 \Big) \\
&=1- 2\lambda h + \lambda^2 h^2 + \sigma^2 h = 1 -h m(\lambda,h, \sigma) \geq 0.
\end{aligned}
\end{align}
Now, we combine \eqref{D-0-12} and \eqref{D-0-13} to get 
\[ \bbe |X^i_n - X^j_n|^2  = (1-hm)^{n}  \bbe|X^i_0 - X^j_0|^2\leq e^{-mnh}\bbe|X^i_0 - X^j_0|^2 .\]
So a sufficient condition for the exponential decay of $\bbe |X^i_n - X^j_n|^2$ is
\[
m(\lambda,h, \sigma)>0\quad \Longleftrightarrow \quad h < \frac{2\lambda-\sigma^2}{\lambda}.  
\]
\noindent (iii)~It follows from \eqref{D-0-11} and the inequality:
\[ (1 -\Delta_\ell)^2 = 1 - (2\Delta_\ell - \Delta_\ell^2) \leq e^{-(2\Delta_\ell - \Delta_\ell^2)} \]
that 
\begin{align}
\begin{aligned} \label{D-0-11}
|X^i_n - X^j_n|^2 &= \Pi_{\ell = 0}^{n-1} (1- \Delta(\ell) )^2 |X^i_0 - X^j_0|^2 \leq  \Pi_{\ell = 0}^{n-1} e^{-(2\Delta_\ell - \Delta_\ell^2)} |X^i_0 - X^j_0|^2 \\
&= \exp \Big[ -\sum_{\ell = 0}^{n-1} \Delta_\ell (2-\Delta_{\ell}) \Big] |X^i_0 - X^j_0|^2 \\
&=  \exp \Big[ -n \times \frac{1}{n} \sum_{\ell = 0}^{n-1} \Delta_\ell (2-\Delta_{\ell}) \Big] |X^i_0 - X^j_0|^2.
\end{aligned}
\end{align}
On the other hand, using \eqref{D-0-13} one has
\[ {\mathbb E} [2\Delta_\ell - \Delta_\ell^2] =h m. \]
Next, we use the strong law of large numbers to see
\[  Y_n :=  \frac{1}{n} \sum_{\ell = 0}^{n-1} \Delta_\ell (2-\Delta_{\ell})~~ \to~~ {\mathbb E} \Big[ \Delta_\ell (2-\Delta_{\ell}) \Big] = h m \quad \mbox{as 
$n \to \infty$~a.s.}  \]
where $Y_n$ is also a random variable since $\Delta_\ell$ is a random variable. 
In \eqref{D-0-11}, we use the above convergence to see that
\[
|X^i_n - X^j_n| \leq  \exp \Big[ -n \times \frac{1}{2n} \sum_{\ell = 0}^{n-1} \Delta_\ell (2-\Delta_{\ell}) \Big] |X^i_0 - X^j_0|
\]
and
\[
\frac{1}{2n} \sum_{\ell = 0}^{n-1} \Delta_\ell (2-\Delta_{\ell}) \to  \frac{hm}{2},\quad  n\longrightarrow\infty.
\]
\end{proof}
\begin{remark} If one uses the result (ii) and the Cauchy-Schwarz inequality, one can obtain
\[  \bbe |X^i_n - X^j_n| \leq \sqrt{\bbe[ |X^i_n - X^j_n|^2] } \leq  (1-hm)^{\frac{n}{2}} \sqrt{\bbe [ |X^i_0 - X^j_0|^2 ]}.   \]

\end{remark}
\section{Convergence analysis for continuous algorithm} \label{sec:4}
\setcounter{equation}{0}
In this section, we provide a convergence analysis for the continuous CBO algorithm using Ito's calculus. In previous section, we showed that the continuous algorithm admits a global consensus for any initial data. Thus, the natural question is whether this global consensus is a global minimum of $L$ or not. If the answer is affirmative, then under what condition such a coincidence will occur? This is the main concern of this section. \newline

Recall that $X_t^i$ satisfies 
\begin{equation} \label{D-1}
dX^i_t = -\lambda (X^i_t - {\bar X}_t^*)dt  + \sigma  \sum_{l=1}^{d} (x^{i,l}_t - {\bar x}_t^{*,l}) dW_t^l e_l,
\end{equation}
and we introduce an ensemble average:
\[
{\bar X}_t:=\frac{1}{N}\sum_{i=1}^N X_t^i = ({\bar x}_t^1, \cdots, {\bar x}_t^d).
\]
Next, we present three elementary lemmas to be crucially used in the proof of convergence analysis.
\begin{lemma}\label{L4.1}
Let $\{X_t^i\}_{1\leq i\leq N}$ be a solution to \eqref{D-1}. Then, the following estimates hold almost surely. 
\begin{eqnarray*}
&& (i)~|X_t^i-\bar X_t|^2=\sum_{l=1}^d (x_0^{i,l} -{\bar x}^l_0)^2\exp\left[ -\Big(2\lambda+\sigma^2\Big)t+2\sigma W_t^l\right].\\
&& (ii)~|\bar X_t-\bar X_t^*|^2\leq \max_{1\leq i\leq N}|X_t^i-\bar X_t|^2.\\
&& (iii)~\frac{1}{N}\sum_{i=1}^N |X_t^i-\bar X_t^*|^2\leq 2 \sum_{l=1}^d \left(\max_{1\leq i\leq N} (x_0^{i,l} -{\bar x}^l_0)^2\right)\exp\left[ -\Big(2\lambda+\sigma^2\Big)t+2\sigma W_t^l\right].
\end{eqnarray*}
\end{lemma}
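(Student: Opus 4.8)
The plan is to prove the three assertions in the listed order: (i) yields an exact pathwise formula for the fluctuation $X_t^i-\bar X_t$ of a sample point about the ensemble mean, (ii) compares the ensemble mean $\bar X_t$ with the weighted mean $\bar X_t^*$ by a Jensen-type inequality, and (iii) combines the two via a variance decomposition. For (i), I would first average \eqref{D-1} over $i=1,\dots,N$ to obtain
\[
d\bar X_t=-\lambda(\bar X_t-\bar X_t^*)\,dt+\sigma\sum_{l=1}^d(\bar x_t^l-\bar x_t^{*,l})\,dW_t^l e_l,
\]
and then subtract this from \eqref{D-1}. By the same cancellation used in the passage \eqref{C-3}--\eqref{C-4}, the $\bar X_t^*$-terms disappear componentwise, so for each $l$ the scalar $u_t^{i,l}:=x_t^{i,l}-\bar x_t^l$ solves the geometric Brownian motion $du_t^{i,l}=-\lambda u_t^{i,l}\,dt+\sigma u_t^{i,l}\,dW_t^l$. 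Applying Ito's formula to $\ln u_t^{i,l}$ exactly as in \eqref{C-6}--\eqref{C-11} gives $u_t^{i,l}=(x_0^{i,l}-\bar x_0^l)\exp[-(\lambda+\sigma^2/2)t+\sigma W_t^l]$; squaring and summing over $l\in\{1,\dots,d\}$ yields (i).

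For (ii), writing $\psi_t^k$ as in \eqref{A-3} we have $\bar X_t^*=\sum_{k=1}^N\psi_t^kX_t^k$ with $\psi_t^k\ge 0$ and $\sum_k\psi_t^k=1$, so that $\bar X_t-\bar X_t^*=\sum_{k=1}^N\psi_t^k(\bar X_t-X_t^k)$. Jensen's inequality for the convex map $z\mapsto|z|^2$ and the probability vector $(\psi_t^k)_k$ then gives
\[
|\bar X_t-\bar X_t^*|^2\le\sum_{k=1}^N\psi_t^k|X_t^k-\bar X_t|^2\le\max_{1\le i\le N}|X_t^i-\bar X_t|^2,
\]
which is (ii).

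For (iii), I would use that $\bar X_t=\frac1N\sum_i X_t^i$ is the arithmetic mean, so expanding $|X_t^i-\bar X_t^*|^2=|(X_t^i-\bar X_t)+(\bar X_t-\bar X_t^*)|^2$ and averaging over $i$ eliminates the cross term, leaving the variance-type identity $\frac1N\sum_{i=1}^N|X_t^i-\bar X_t^*|^2=\frac1N\sum_{i=1}^N|X_t^i-\bar X_t|^2+|\bar X_t-\bar X_t^*|^2$. Bounding the first summand by $\sum_{l=1}^d(\max_i(x_0^{i,l}-\bar x_0^l)^2)\exp[-(2\lambda+\sigma^2)t+2\sigma W_t^l]$ via (i), and the second summand by the same quantity via (ii) together with (i), and adding the two bounds produces exactly the constant $2$ in (iii).

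The main obstacle is essentially bookkeeping rather than anything deep. The two points that require care are: first, checking that the mean-subtracted process genuinely keeps the componentwise geometric-Brownian-motion structure of \eqref{D-1}, i.e. that the coupling through $\bar X_t^*$ cancels --- this is the exact analogue of the identity \eqref{C-3} exploited in Section \ref{sec:3}; and second, retaining the sharp constant in (iii), which is precisely why Step 3 must use the orthogonal variance decomposition rather than the lossy bound $|a+b|^2\le 2|a|^2+2|b|^2$.
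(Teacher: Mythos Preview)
Your proposal is correct and follows essentially the same route as the paper: for (i) the paper also averages \eqref{D-1}, subtracts to obtain the componentwise geometric Brownian motion \eqref{D-6}, and integrates as in \eqref{C-6}--\eqref{C-11}; for (ii) the paper uses the triangle inequality followed by Cauchy--Schwarz, which amounts to your one-line Jensen argument; and for (iii) the paper carries out exactly the variance decomposition you describe, with the cross term vanishing and both remaining terms bounded by $\max_i|X_t^i-\bar X_t|^2$ via (i) and (ii). Your final remark about needing the orthogonal decomposition rather than $|a+b|^2\le 2|a|^2+2|b|^2$ to get the constant $2$ is apt and matches the paper's computation in \eqref{D-6-1}.
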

\begin{proof}
(i)~It follows from \eqref{D-1} that 
\begin{equation}\label{D-5}
d{\bar X}_t=-\lambda({\bar X}_t-\bar X_t^*)dt+\sigma\sum_{l=1}^d( {\bar x}^l_t- {\bar x}_t^{*,l}) dW_t^l e_l.
\end{equation}
We subtract \eqref{D-1} from \eqref{D-5} to obtain
\begin{equation} \label{D-6}
d(X_t^i- {\bar X}_t)=-\lambda(X_t^i- {\bar X}_t)dt+\sigma\sum_{l=1}^d( x_t^{i,l}- {\bar x}^{l}_t) dW_t^l  e_l.
\end{equation}
The $l$-th component of \eqref{D-6} implies
\[
x_t^{i,l} -{\bar x}^{l}_t = (x_0^{i,l} -{\bar x}^l_0) \exp\left[ -\Big(\lambda+\frac{1}{2}\sigma^2\Big)t+\sigma W_t^l \right].
\]
This yields
\[
|X_t^i-\bar X_t|^2=\sum_{l=1}^d (x_0^{i,l} -{\bar x}^l_0)^2\exp\left[ -\Big(2\lambda+\sigma^2\Big)t+2\sigma W_t^l\right].
\]
\noindent (ii)~We use the triangle inequality and the Cauchy-Schwarz inequality to get
\begin{align*}
\begin{aligned}
|{\bar X}_t-\bar X_t^*|^2&=\left|\frac{\sum_{k=1}^Ne^{-\beta L(X_t^k)}( {\bar X}_t-X_t^k)}{\sum_{k=1}^Ne^{-\beta L(X_t^k)}}\right|^2\leq \left[\frac{\sum_{k=1}^Ne^{-\beta L(X_t^k)}| {\bar X}_t-X_t^k|}{\sum_{k=1}^Ne^{-\beta L(X_t^k)}}\right]^2\\
&\leq \frac{\sum_{k=1}^N e^{-\beta L(X_t^k)}| {\bar X}_t-X_t^k|^2}{\sum_{k=1}^Ne^{-\beta L(X_t^k)}}\leq \max_{1\leq k \leq N}| {\bar X}_t-X_t^k |^2.
\end{aligned}
\end{align*}
(iii)~Note that
\begin{align}\label{D-6-1}
\begin{aligned}
\frac{1}{N}\sum_{i=1}^N |X_t^i-\bar X_t^*|^2&=\frac{1}{N}\sum_{i=1}^N \left(|X_t^i-\bar X_t|^2+2(X_t^i-\bar X_t)\cdot(\bar X_t-\bar X_t^*)+|\bar X_t-\bar X_t^*|^2\right)\\
&=\frac{1}{N}\sum_{i=1}^N  |X_t^i-\bar X_t|^2 +|\bar X_t-\bar X_t^*|^2 \leq 2\max_{1\leq i\leq N} |X_t^i-\bar X_t|^2\\
&=2\max_{1\leq i\leq N} \left(\sum_{l=1}^d (x_0^{i,l} -{\bar x}^l_0)^2\exp\left[ -\Big(2\lambda+\sigma^2\Big)t+2\sigma W_t^l\right]\right)\\
&\leq 2 \sum_{l=1}^d \left(\max_{1\leq i\leq N} (x_0^{i,l} -{\bar x}^l_0)^2\right)\exp\left[ -\Big(2\lambda+\sigma^2\Big)t+2\sigma W_t^l\right],
\end{aligned}
\end{align}
where we used the inequalities from (i) and (ii).
\end{proof}

\begin{lemma}\label{L4.2}
Let $\{X_t^i\}_{1\leq i\leq N}$ be a solution to \eqref{D-1}. Then, the following estimates hold. 
\begin{eqnarray*}
&& (i)~\frac{1}{N}\sum_{i=1}^N \mathbb E|X_t^i-\bar X_t^*|^2\leq 2e^{-(2\lambda-\sigma^2)t} \sum_{l=1}^d \mathbb E \Big[ \max_{1\leq i\leq N} (x_0^{i,l} -{\bar x}^l_0)^2 \Big].\\
&& (ii)~ \mbox{If}~2\lambda>\sigma^2,~\mbox{then there exists a random vector}~X_\infty~ \mbox{such that}~ \\
&& \hspace{3.5cm} \lim\limits_{t\to\infty}  X_t^i=X_\infty~\mbox{a.s.},~1\leq i\leq N.
\end{eqnarray*}
\end{lemma}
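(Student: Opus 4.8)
The plan is to establish part~(i) by taking expectations in the almost-sure bound of Lemma~\ref{L4.1}(iii), and then to deduce part~(ii) from part~(i) together with a Cauchy-criterion argument in $L^2$ and a Borel--Cantelli refinement. For part~(i), I would start from the pointwise inequality
\[
\frac{1}{N}\sum_{i=1}^N |X_t^i-\bar X_t^*|^2\leq 2 \sum_{l=1}^d \left(\max_{1\leq i\leq N} (x_0^{i,l} -{\bar x}^l_0)^2\right)\exp\left[ -\Big(2\lambda+\sigma^2\Big)t+2\sigma W_t^l\right]
\]
from Lemma~\ref{L4.1}, take $\mathbb E$ on both sides, and use the independence of the initial data from the Brownian motions $W^l_t$ together with the moment generating function of a Gaussian: $\mathbb E\big[e^{2\sigma W_t^l}\big]=e^{2\sigma^2 t}$. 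This converts the exponent $-(2\lambda+\sigma^2)t$ into $-(2\lambda-\sigma^2)t$, yielding the stated estimate. (One should implicitly assume, as elsewhere in the paper, that the initial data has finite second moments so the right-hand side is finite.)

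For part~(ii), assume $2\lambda>\sigma^2$, so the exponential rate $2\lambda-\sigma^2$ in part~(i) is strictly positive. First I would show that the ensemble average $\bar X_t$ converges a.s.\ and in $L^2$. From \eqref{D-5}, $\bar X_t$ satisfies $d\bar X_t=-\lambda(\bar X_t-\bar X_t^*)dt+\sigma\sum_l(\bar x_t^l-\bar x_t^{*,l})dW_t^le_l$; integrating gives, for $s<t$,
\[
\bar X_t-\bar X_s=-\lambda\int_s^t(\bar X_\tau-\bar X_\tau^*)\,d\tau+\sigma\sum_{l=1}^d\int_s^t(\bar x_\tau^l-\bar x_\tau^{*,l})\,dW_\tau^le_l.
\]
Using Lemma~\ref{L4.1}(ii) to bound $|\bar X_\tau-\bar X_\tau^*|$ by $\max_i|X_\tau^i-\bar X_\tau|$, the drift term is dominated in $L^2$ by $\int_s^t e^{-(\lambda-\sigma^2/2)\tau}\,d\tau$-type quantities (after taking expectations and applying the Gaussian moment computation again), which is a Cauchy tail as $s\to\infty$; the Itô integral term is controlled by its quadratic variation, again an exponentially small tail by Itô isometry. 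Hence $\{\bar X_t\}$ is Cauchy in $L^2$, so it has an $L^2$-limit $X_\infty$; to upgrade to almost-sure convergence one can either invoke that the stochastic integral $\int_0^t(\bar x_\tau^l-\bar x_\tau^{*,l})dW_\tau^l$ is an $L^2$-bounded martingale (hence a.s.\ convergent by the martingale convergence theorem) and that the drift integral converges absolutely a.s.\ since its integrand decays exponentially a.s.\ by the explicit formula in Lemma~\ref{L4.1}(i). Finally, $|X_t^i-\bar X_t|\to 0$ a.s.\ for each $i$ directly from the explicit formula in Lemma~\ref{L4.1}(i) and the law of the iterated logarithm (as in Theorem~\ref{T3.2}), so $X_t^i\to X_\infty$ a.s.\ for every $i$.

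The main obstacle I anticipate is the passage from $L^2$-convergence to almost-sure convergence of $\bar X_t$: one must handle the drift and diffusion parts of the SDE for $\bar X_t$ separately, and for the diffusion part the cleanest route is to recognize the stochastic integral as an $L^2$-bounded (hence a.s.\ convergent) martingale rather than trying to force a pathwise Cauchy estimate directly. A secondary subtlety is that the communication-weighted average $\bar X_t^*$ does not itself satisfy a clean closed SDE, so every bound on $\bar X_t-\bar X_t^*$ must be routed through the comparison inequality of Lemma~\ref{L4.1}(ii); keeping all constants dimension-explicit (the $\sum_{l=1}^d$) but noting that the decay rate $2\lambda-\sigma^2$ is dimension-independent is what makes the argument align with the paper's stated goals.
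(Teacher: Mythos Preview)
Your proof of part~(i) is exactly the paper's argument: take expectations in Lemma~\ref{L4.1}(iii), use independence of the initial data from the Brownian motions, and evaluate $\mathbb E\,e^{2\sigma W_t^l}=e^{2\sigma^2 t}$.

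For part~(ii) your argument is correct and uses the same two ingredients as the paper --- almost-sure exponential decay of the drift integrand (from Lemma~\ref{L4.1}) to get convergence of the Lebesgue integral, and the $L^2$-bounded martingale convergence theorem for the It\^o integral --- but you organize them slightly differently. The paper applies these directly to each $X_t^i$ via the integral form $x_t^{i,l}=x_0^{i,l}-\lambda\int_0^t(x_s^{i,l}-\bar x_s^{*,l})\,ds+\sigma\int_0^t(x_s^{i,l}-\bar x_s^{*,l})\,dW_s^l$, obtains a limit $X_\infty^i$ for each $i$, and then uses $|X_t^i-X_t^j|\to 0$ a.s.\ to identify all the limits. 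You instead first pass to the limit in $\bar X_t$ and then use $|X_t^i-\bar X_t|\to 0$ a.s. Both decompositions work; yours is arguably cleaner on the drift side (you simply note absolute convergence from the exponential bound), whereas the paper introduces an auxiliary monotone quantity $\mathcal J_{11}$ to reach the same conclusion. Your initial detour through an $L^2$-Cauchy criterion and ``Borel--Cantelli refinement'' is unnecessary --- as you yourself note, the martingale convergence theorem plus pathwise absolute convergence of the drift already give almost-sure convergence directly, with no subsequence or Borel--Cantelli step required.
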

\begin{proof}
\noindent (i)~We take expectation on both sides of \eqref{D-6-1} to get
\begin{align*}
\begin{aligned}
\frac{1}{N}\sum_{i=1}^N \mathbb E|X_t^i-\bar X_t^*|^2&\leq 2 \sum_{l=1}^d \left(\mathbb E\max_{1\leq i\leq N} (x_0^{i,l} -{\bar x}^l_0)^2\right)\mathbb E\exp\left[ -\Big(2\lambda+\sigma^2\Big)t+2\sigma W_t^l\right]\\
&\leq 2e^{-(2\lambda-\sigma^2)t} \sum_{l=1}^d \left(\mathbb E\max_{1\leq i\leq N} (x_0^{i,l} -{\bar x}^l_0)^2\right),
\end{aligned}
\end{align*}
where we used
\[
\exp\left(2\sigma W_t^l\right)\sim\operatorname{Lognormal}(0,4\sigma^2t)\quad\Longrightarrow \quad \mathbb E\exp\left(2\sigma W_t^l\right)=\exp(2\sigma^2 t).
\]
(ii)~Note that equation $\eqref{D-1}$ is equivalent to the following integral relation: for $i=1,\cdots,N$ and $l=1,\cdots,d$, 
\begin{equation*}
x_t^{i,l}=x_0^{i,l}-\lambda\int_0^t (x_s^{i,l} - \bar x_s^{*,l}) ds + \sigma \int_0^t (x_s^{i,l} - \bar x_s^{*,l}) dW_s^l=:x_0^{i,l}-\lambda {\mathcal I}_{11} + \sigma {\mathcal I}_{12}.
\end{equation*}
Next, we show the a.s. convergence ${\mathcal I}_{11}$ and ${\mathcal I}_{12}$ separately.\newline

\noindent $\bullet$~Case A (Almost sure convergence of ${\mathcal I}_{11}$):~ By (iii), we have 
\[
|x_t^{i,l} - \bar x_t^{*,l}|\leq \sqrt{\sum_{i=1}^N |X_t^i-\bar X_t^*|^2}\leq \sqrt{2N \sum_{l=1}^d \left(\max_{1\leq i\leq N} (x_0^{i,l} -{\bar x}^l_0)^2\right)\exp\left[ -\Big(2\lambda+\sigma^2\Big)t+2\sigma W_t^l\right]}.
\]
This yields that there exist positive random functions $C_i = C_i(\omega),~i= 1,2$ such that 
\[ |x_t^{i,l} - \bar x_t^{*,l}|\leq C_1e^{-C_2t},  \quad \mbox{a.s.}~\omega \in \Omega, \] 
where $C_1$ and $C_2$ are positive constants. We set 
\[ {\mathcal J}_{11} := {\mathcal I}_{11} -\int_0^t  C_1e^{-C_2s} ds = \int_0^t \underbrace{\big(x_s^{i,l} - \bar x_s^{*,l} -C_1e^{-C_2s} \big)}_{\leq 0} ds. \]
Since the integrand is nonpositive a.s., $ {\mathcal J}_{11}$ is non-increasing in $t$ a.s.  \newline

On the other hand, note that 
\begin{align*}
\begin{aligned}
& {\mathcal J}_{11} = {\mathcal I}_{11} -\frac{C_1}{C_2}(1-e^{-C_2t}) \geq {\mathcal I}_{11} -\frac{2C_1}{C_2}+\frac{C_1}{C_2}(1-e^{-C_2t}) \\
& \hspace{0.5cm} = {\mathcal I}_{11} -\frac{2C_1}{C_2}+\int_0^t  C_1e^{-C_2s} ds =-\frac{2C_1}{C_2}+\int_0^t  \big(x_s^{i,l} - \bar x_s^{*,l} +C_1e^{-C_2s} \big) ds \geq -\frac{2C_1}{C_2}.
\end{aligned}
\end{align*}
Since ${\mathcal J}_{11}$ is monotone decreasing and bounded below along sample paths, one has
\[  \exists~\alpha = \lim_{t \to \infty} {\mathcal J}_{11}(t) = \lim_{t \to \infty}  \Big( {\mathcal I}_{11} -\int_0^t  C_1e^{-C_2s} ds \Big), \quad \mbox{a.s.} \]
This implies
\[ \lim_{t\to\infty} {\mathcal I}_{11}=\alpha+\frac{C_1}{C_2}, \quad \mbox{a.s.} \]

\noindent $\bullet$~Case B (Almost sure convergence of ${\mathcal I}_{12}$):~Note that ${\mathcal I}_{12}$ is martingale and its $L^2(\Omega)$-norm is uniformly bounded in $t$:
\begin{align*}
\begin{aligned}
\mathbb E \left[\int_0^t \Big(x_s^{i,l} - \bar x_s^{*,l} \Big) dW_s^l\right]^2 &=\mathbb E \int_0^t (x_s^{i,l} - \bar x_s^{*,l})^2 ds\leq \int_0^t\sum_{i=1}^N \mathbb E|X_s^i-\bar X_s^*|^2ds\\
&\leq 2N\left(\int_0^t e^{-(2\lambda-\sigma^2)s}ds\right) \sum_{l=1}^d \left(\mathbb E\max_{1\leq i\leq N} (x_0^{i,l} -{\bar x}^l_0)^2\right) \\
&\leq  \frac{2N}{2\lambda-\sigma^2} \sum_{l=1}^d \left(\mathbb E\max_{1\leq i\leq N} (x_0^{i,l} -{\bar x}^l_0)^2\right).
\end{aligned}
\end{align*}
In the second inequality we used (iv). Hence $\lim\limits_{t\to\infty} {\mathcal I}_{12}$ exists a.s. Now we have shown that for each $i=1,\cdots, N$, there exists some random variable $X^{i}_{\infty}$ such that 
\[ \lim\limits_{t\to\infty}X_t^i=X^{i}_{\infty} \quad \mbox{a.s.} \]
Since for any $1\leq i,j\leq N$,  
\[ \lim\limits_{t\to\infty}|X_t^i-X_t^j|=0, \quad \mbox{a.s.} \]
Hence, there exists $X_\infty$ such that 
\[ X^{i}_{\infty}=X^{j}_{\infty}=:X_\infty \quad \mbox{a.s.} \]
\end{proof}
\begin{lemma}\label{L4.3}
 Let $\{X_t^i\}_{1\leq i\leq N}$ be a solution to \eqref{D-1}. Then, the quadratic variation of $x^k_t$ and $x^l_t$ is given as follows.
\[ dx^k_t \cdot dx^l_t = \begin{cases}
\sigma^2 |x^{i,k}_t - {\bar x}_t^{*,k}|^2 dt, \quad & k = l, \\
0, \quad & k \not = l.
\end{cases} \]
\end{lemma}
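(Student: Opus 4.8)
The identity is a direct application of the Itô multiplication table, so the proof is short; here $x^k_t$ should be read as the $k$-th coordinate $x^{i,k}_t$ of a fixed particle $X^i_t$. First I would project \eqref{D-1} onto the basis vector $e_k$: since only the $l=k$ summand of $\sigma\sum_{l=1}^d (x^{i,l}_t - {\bar x}_t^{*,l})\,dW^l_t e_l$ has a nonzero $e_k$-component, the scalar process $x^{i,k}_t$ solves
\[
dx^{i,k}_t = -\lambda\big(x^{i,k}_t - {\bar x}_t^{*,k}\big)\,dt + \sigma\big(x^{i,k}_t - {\bar x}_t^{*,k}\big)\,dW^k_t ,
\]
and likewise for $x^{i,l}_t$. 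Both the drift and diffusion coefficients here are continuous adapted processes (indeed, by Lemma \ref{L4.1} they are bounded along sample paths), so the cross-variation below is well defined.

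Next I would form the product $dx^{i,k}_t\cdot dx^{i,l}_t$ and expand it into the four contributions coming from $dt\cdot dt$, $dt\cdot dW^l_t$, $dW^k_t\cdot dt$, and $dW^k_t\cdot dW^l_t$, each carrying the appropriate bounded coefficient. The standard conventions $dt\cdot dt=0$ and $dt\cdot dW^l_t = dW^k_t\cdot dt = 0$ annihilate the first three, leaving
\[
dx^{i,k}_t\cdot dx^{i,l}_t = \sigma^2\big(x^{i,k}_t - {\bar x}_t^{*,k}\big)\big(x^{i,l}_t - {\bar x}_t^{*,l}\big)\,dW^k_t\cdot dW^l_t .
\]

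Finally I would use the covariance structure of the noise. Because $W^1_t,\dots,W^d_t$ are i.i.d. one-dimensional Brownian motions (equivalently $\bbe[W^{k}_t W^{l}_t] = \delta_{kl}t$), one has $dW^k_t\cdot dW^l_t = \delta_{kl}\,dt$. Substituting this kills the right-hand side when $k\neq l$, and for $k=l$ it yields $\sigma^2\big(x^{i,k}_t - {\bar x}_t^{*,k}\big)^2\,dt = \sigma^2|x^{i,k}_t - {\bar x}_t^{*,k}|^2\,dt$, which is exactly the assertion. There is no real obstacle in this lemma; the only point worth stating explicitly is the vanishing of the cross-variation $d\langle W^k, W^l\rangle_t$ for $k\neq l$, which is just the independence of the component Brownian motions built into the model. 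If one prefers to bypass differential notation altogether, the same computation reads $\langle x^{i,k}, x^{i,l}\rangle_t = \sigma^2\int_0^t (x^{i,k}_s - {\bar x}_s^{*,k})(x^{i,l}_s - {\bar x}_s^{*,l})\,d\langle W^k, W^l\rangle_s$ together with $\langle W^k, W^l\rangle_t = \delta_{kl}t$.
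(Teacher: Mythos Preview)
Your proof is correct and follows essentially the same approach as the paper: both project \eqref{D-1} onto the $k$-th coordinate to obtain the scalar SDE \eqref{NE-1}, then apply the It\^o multiplication rules $dt\cdot dt=0$, $dt\cdot dW^l_t=0$, $dW^k_t\cdot dW^l_t=\delta_{kl}\,dt$ to arrive at $dx^k_t\cdot dx^l_t=\sigma^2\delta_{kl}(x^{i,k}_t-\bar x_t^{*,k})(x^{i,l}_t-\bar x_t^{*,l})\,dt$. Your added remarks on the integral form of the cross-variation and on boundedness via Lemma~\ref{L4.1} are fine but not needed for the argument.
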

\begin{proof} It follows from \eqref{D-1} that the $l$-th component of $X_t^{i}$  satisfies 
\begin{align}
\begin{aligned} \label{NE-1}
dx^{i,l}_t = -\lambda (x^{i,l}_t - {\bar x}^{*,l}_t)dt  + \sigma   (x^{i,l}_t - {\bar x}_t^{*,l}) dW_t^l.
\end{aligned}
\end{align}
Now, we use the following quadratic variation relations:
\[ dt \cdot dt = 0, \quad dt \cdot dW^l_t = 0, \quad  dW^l_t  \cdot dt = 0, \quad  dW_t^l \cdot dW_t^k = \delta_{lk} dt \]
and \eqref{D-1} to see
\[
dx^k_t \cdot  dx^l_t = \sigma^2   \delta_{kl}   (x^{i,k}_t - {\bar x}_t^{*,k})(x^{i,l}_t - {\bar x}_t^{*,l}) dt.
\]
This certainly implies the desired estimate.
\end{proof}

\vspace{0.2cm}

In what follows, we use a handy notation for partial derivatives:
\[ \partial_l  := \frac{\partial}{\partial x^l}, \quad \partial^2_{kl} := \frac{\partial^2}{\partial x^k \partial x^l}, \quad  l, k = 1, \cdots, d. \]
Let $L = L(x)$ be a $C_b^2$-objective function satisfying the following relations:
\begin{equation} \label{D-7}
 L_m:=\inf_{x \in \bbr^d} L(x) >0 \quad \mbox{and} \quad C_L:=\max\left\{\sup_{x\in\bbr^d}\|\nabla^2L(x)\|_2,\max_{1\leq l\leq d}\sup\limits_{x\in\bbr^d}|\partial^2_{l}L(x)|\right\} <\infty,
\end{equation} 
where $\|\cdot\|_2$ denotes the spectral norm. 
First, note that
\begin{align}
\begin{aligned} \label{D-7-1}
& \partial_k \Big( e^{-\beta L(X_t^i)} \Big) = -\beta e^{-\beta L(X_t^i)} \partial_k L(X_t^i), \\
& \partial_{kl}^2 \Big( e^{-\beta L(X_t^i)} \Big) = \beta e^{-\beta L(X_t^i)} \Big[  \beta \partial_l L(X_t^i)  \cdot \partial_k L(X_t^i) - \partial_{lk}^2 L(X_t^i) \Big ].
\end{aligned}
\end{align}
Now, we apply Ito's formula to $\frac{1}{N}\sum_{i=1}^N e^{-\beta L(X_t^i)}$ using the relations \eqref{D-7-1} to get 
\begin{align}
\begin{aligned} \label{D-8}
&d\left( \frac{1}{N}\sum_{i=1}^N e^{-\beta L(X_t^i)}\right) \\
&  = \frac{1}{N}\sum_{i=1}^N \Big[ \sum_{k=1}^d \partial_k \Big( e^{-\beta L(X_t^i)} \Big) dx_t^{i,k} + \frac{1}{2} \sum_{k,l=1}^d  \partial_{kl}^2\Big( e^{-\beta L(X_t^i) } \Big) dx_t^{i,k} \cdot dx_t^{i,l}                     \Big ] \\
&  = \frac{1}{N}\sum_{i=1}^N \Big[ -\beta e^{-\beta L(X_t^i)}\sum_{k=1}^d \partial_k L(X_t^i) \Big( -\lambda (x^{i,k}_t - {\bar x}^{*,k}_t)dt  + \sigma    (x^{i,k}_t - {\bar x}_t^{*,k}) dW_t^k\Big)\Big]\\
& \hspace{0.5cm} + \frac{1}{N}\sum_{i=1}^N \frac{1}{2}\beta e^{-\beta L(X_t^i)} \Big[\sum_{k,l=1}^d  \Big(  \beta \partial_l L(X_t^i)  \cdot 
\partial_k L(X_t^i) - \partial_{lk}^2 L(X_t^i)  \Big ) \sigma^2   \delta_{kl}   (x^{i,k}_t - {\bar x}_t^{*,k})(x^{i,l}_t - {\bar x}_t^{*,l}) dt  \Big ] \\
&  =\frac{1}{N}\sum_{i=1}^N \beta e^{-\beta L(X_t^i)}\nabla L(X_t^i)\cdot\Big[ \lambda (X_t^i-\bar X_t^*)dt-\sigma \sum_{k=1}^d(x^{i,k}_t - {\bar x}_t^{*,k}) dW_t^k e_k \Big] \\
& \hspace{0.5cm} +\frac{1}{N}\sum_{i=1}^N\Big[ \beta e^{-\beta L(X_t^i)}\sum_{k=1}^d \big(-\partial_{kk}L(X_t^i)+\beta(\partial_k L(X_t^i))^2\big)\frac{1}{2}\sigma^2(x_t^{i,k}-\bar x_t^{*,k})^2 \Big] dt.
\end{aligned}
\end{align}
We take expectations on both sides of \eqref{D-8} to get
\begin{align}\label{D-9}
\begin{aligned}
&d\left(\frac{1}{N} \sum_{i=1}^N \mathbb E e^{-\beta L(X_t^i)}\right) \\
&  =\frac{1}{N}\sum_{i=1}^N \mathbb E\Big[  \beta e^{-\beta L(X_t^i)}\nabla L(X_t^i)\cdot \lambda (X_t^i-\bar X_t^*) \Big] dt \\
& \hspace{0.5cm} +\frac{1}{N}\sum_{i=1}^N\mathbb E\Big[ \beta e^{-\beta L(X_t^i)}\sum_{k=1}^d \Big(-\partial_{kk}L(X_t^i)+\beta(\partial_k L(X_t^i))^2\Big)\frac{1}{2}\sigma^2(x_t^{i,k}-\bar x_t^{*,k})^2 \Big] dt\\
&  =:{\mathcal I}_{21} dt+ {\mathcal I}_{22} dt.
\end{aligned}
\end{align}
Below, we estimate the terms ${\mathcal I}_{2i},~i=1,2$ as follows.\newline

\begin{lemma} \label{L4.3}
Let $\{X_t^i \}$ be a solution to \eqref{D-1}. Then, the term ${\mathcal I}_{2i},~i=1,2$ satisfies
\begin{eqnarray*}
&& (i)~{\mathcal I}_{21} \geq  -\lambda C_L\beta e^{-\beta L_m}\frac{1}{N}\sum_{i=1}^N \mathbb E |X_t^i-\bar X_t^*|^2; \cr
&& (ii)~{\mathcal I}_{22} \geq -\frac{1}{2}\sigma^2 C_L\beta e^{-\beta L_m}\frac{1}{N}\sum_{i=1}^N\mathbb E|X_t^i-\bar X_t^*|^2.
\end{eqnarray*}
\end{lemma}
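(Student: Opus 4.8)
The plan is to bound $\mathcal I_{21}$ and $\mathcal I_{22}$ from \eqref{D-9} pathwise (for a.e.\ $\omega$) and then take expectations, using only the sign structure of the individual factors together with the uniform bounds \eqref{D-7}.

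For (i) the essential device is the decomposition, valid for each $i$,
\[
\nabla L(X_t^i)\cdot(X_t^i-\bar X_t^*)=\big(\nabla L(X_t^i)-\nabla L(\bar X_t^*)\big)\cdot(X_t^i-\bar X_t^*)+\nabla L(\bar X_t^*)\cdot(X_t^i-\bar X_t^*).
\]
After multiplying by $e^{-\beta L(X_t^i)}$ and summing over $i$, the second term contributes nothing: by the definition of $\bar X_t^*$ in \eqref{A-1} one has $\sum_{i=1}^N e^{-\beta L(X_t^i)}(X_t^i-\bar X_t^*)=0$, while $\nabla L(\bar X_t^*)$ is independent of $i$. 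For the first term, the Hessian bound in \eqref{D-7} gives the Lipschitz estimate $|\nabla L(X_t^i)-\nabla L(\bar X_t^*)|\le C_L|X_t^i-\bar X_t^*|$, so Cauchy--Schwarz yields $\big(\nabla L(X_t^i)-\nabla L(\bar X_t^*)\big)\cdot(X_t^i-\bar X_t^*)\ge -C_L|X_t^i-\bar X_t^*|^2$; since $0<e^{-\beta L(X_t^i)}\le e^{-\beta L_m}$ and the quantity just estimated is $\le 0$, multiplying through keeps the inequality with the constant $e^{-\beta L_m}$. Summing over $i$, multiplying by $\lambda\beta/N$ and taking expectations gives (i).

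For (ii), I would discard the manifestly nonnegative term $\beta(\partial_k L(X_t^i))^2\ge 0$ and invoke $|\partial_{kk}L|\le C_L$ from \eqref{D-7} to obtain $-\partial_{kk}L(X_t^i)+\beta(\partial_k L(X_t^i))^2\ge -C_L$. Because $\beta e^{-\beta L(X_t^i)}$, $\frac12\sigma^2$ and $(x_t^{i,k}-\bar x_t^{*,k})^2$ are all nonnegative and $e^{-\beta L(X_t^i)}\le e^{-\beta L_m}$, the $k$-th term inside the bracket defining $\mathcal I_{22}$ is bounded below by $-\frac12\sigma^2 C_L\beta e^{-\beta L_m}(x_t^{i,k}-\bar x_t^{*,k})^2$; summing over $k$ collapses $\sum_{k=1}^d(x_t^{i,k}-\bar x_t^{*,k})^2$ into $|X_t^i-\bar X_t^*|^2$, and then summing over $i$, dividing by $N$ and taking expectations gives (ii).

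All of these steps are elementary, so I do not expect a genuine obstacle; the only delicate point is sign bookkeeping — whenever a possibly negative curvature or gradient-difference factor is multiplied by the positive weight $e^{-\beta L(X_t^i)}$, one must replace $e^{-\beta L(X_t^i)}$ by its upper bound $e^{-\beta L_m}$ on the appropriate side so as not to reverse the estimate, which is precisely why the one-sided bound $L\ge L_m$ (rather than two-sided control of $e^{-\beta L}$) is enough here.
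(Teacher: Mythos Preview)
Your proposal is correct and follows essentially the same route as the paper: for $\mathcal I_{21}$ the paper also uses the cancellation $\sum_i e^{-\beta L(X_t^i)}(X_t^i-\bar X_t^*)=0$ to replace $\nabla L(X_t^i)$ by $\nabla L(X_t^i)-\nabla L(\bar X_t^*)$ and then invokes the Hessian bound from \eqref{D-7}, and for $\mathcal I_{22}$ it likewise drops the nonnegative term $\beta(\partial_k L)^2$ and uses $|\partial_{kk}L|\le C_L$ together with $e^{-\beta L(X_t^i)}\le e^{-\beta L_m}$. Your sign-bookkeeping remark is exactly the point that makes the replacement of $e^{-\beta L(X_t^i)}$ by $e^{-\beta L_m}$ legitimate on the lower-bound side.
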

\begin{proof} Below, we estimate ${\mathcal I}_{2i}$ separately. \newline

\noindent $\bullet$~(Estimate of ${\mathcal I}_{21}$): First, we use definition of $\bar X_t^*$ to see 
\[ \bigg(\sum_{i=1}^N e^{-\beta L(X_t^i)}\bigg)\bar X_t^*=\sum_{i=1}^N e^{-\beta L(X_t^i)}X_t^i. \]
This yields
\begin{equation} \label{D-9-1}
  \sum_{i=1}^N e^{-\beta L(X_t^i)}\nabla L(\bar X_t^*)\cdot (\bar X_t^*-X_t^i)=0.
 \end{equation} 
Then, we use \eqref{D-7} and \eqref{D-9-1} to find
\begin{align}
\begin{aligned} \label{D-10}
{\mathcal I}_{11} & =\frac{\beta \lambda}{N}\sum_{i=1}^N \mathbb E\Big[ e^{-\beta L(X_t^i)}\nabla L(X_t^i)\cdot (X_t^i-\bar X_t^*) \Big] dt \\
&=\frac{\beta \lambda}{N}\sum_{i=1}^N \mathbb E\Big[ e^{-\beta L(X_t^i)}\big(\nabla L(X_t^i)- \nabla L(\bar X_t^*)\big)\cdot (X_t^i-\bar X_t^*) \Big] \\
&\geq -\lambda C_L\beta e^{-\beta L_m}\frac{1}{N}\sum_{i=1}^N \mathbb E |X_t^i-\bar X_t^*|^2.
\end{aligned}
\end{align}

\noindent $\bullet$~(Estimate of ${\mathcal I}_{22}$): By direct calculation, one has
\begin{align}
\begin{aligned} \label{D-11}
{\mathcal I}_{12} &=-\frac{\sigma^2 \beta}{2N}\sum_{i=1}^N\mathbb E\Big[ e^{-\beta L(X_t^i)}\sum_{k=1}^d \partial_{kk}L(X_t^i) (x_t^{i,k}-\bar x_t^{*,k})^2 \Big] \\
&\geq -\frac{1}{2}\sigma^2 C_L\beta e^{-\beta L_m}\frac{1}{N}\sum_{i=1}^N\mathbb E|X_t^i-\bar X_t^*|^2 .
\end{aligned}
\end{align}
\end{proof}
Now, we are ready to provide the convergence result of the continuous CBO algorithm. Note that in \cite{C-J-L-Z}, the $L^2(\Omega)$-limit of the stochastic process $X_t$ was actually equal to some non-random $\tilde x\in\bbr^d$, but it is not the case for our $N$-particle model. This resulted in the statement of Theorem \ref{T4.1} slightly different from the analogous theorem (Theorem 3.1) in \cite{C-J-L-Z}.
\begin{theorem} \label{T4.1}
Suppose that $\lambda, \sigma$ and $\{X_0^i \}$ satisfy
\begin{align*}
\begin{aligned}
& 2\lambda>\sigma^2, \quad X_0^i: i,i.d,, \quad X_0^i \sim X^{in}~~\mbox{for some random variable $X^{in}$}, \cr
&  (1-\varepsilon)\mathbb E \Big[ e^{-\beta L(X^{in})} \Big ] \geq \frac{2\lambda+\sigma^2}{2\lambda-\sigma^2} C_L\beta e^{-\beta L_m}\sum_{l=1}^d \mathbb E \Big[ \max_{1\leq i\leq N} (x_0^{i,l} -{\bar x}^l_0)^2 \Big],
\end{aligned}
\end{align*}
for some $0<\varepsilon<1$. Then, one has 
\[
\operatorname{ess~inf}_{\omega\in\Omega} L(X^\infty(\omega))\leq \operatorname{ess~inf}_{\omega\in\Omega} L(X^{in}(\omega))+ {\mathcal O}\Big(\frac{1}{\beta}\Big),\quad \mbox{for $\beta \gg 1$}.
\]
Consequently, if the global minimizer $X^*$ of $L$ is contained in $\operatorname{supp}\operatorname{law}(X^{in})$, then
\[
\operatorname{ess~inf}_{\omega\in\Omega} L(X^\infty(\omega))\leq L_m+ {\mathcal O}\Big(\frac{1}{\beta}\Big),\quad \mbox{for $\beta \gg 1$}.
\]
\end{theorem}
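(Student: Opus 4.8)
The plan is to integrate the identity \eqref{D-9} in time, using the lower bounds on ${\mathcal I}_{21},{\mathcal I}_{22}$ from Lemma \ref{L4.3} together with the decay estimate of Lemma \ref{L4.2}(i), to obtain a lower bound for $\mathbb Ee^{-\beta L(X_\infty)}$ in terms of $\mathbb Ee^{-\beta L(X^{in})}$; then to pass from this ``free-energy'' inequality to the desired bound on $\operatorname{ess~inf}_{\omega}L(X_\infty(\omega))$ by taking $-\tfrac1\beta\log$ of both sides and invoking Laplace's principle. Concretely, I would first combine \eqref{D-9} with Lemma \ref{L4.3} and Lemma \ref{L4.2}(i) to get, for $2\lambda>\sigma^2$,
\[
\frac{d}{dt}\Big(\frac1N\sum_{i=1}^N\mathbb Ee^{-\beta L(X_t^i)}\Big)\ \geq\ -\Big(\lambda+\tfrac{\sigma^2}{2}\Big)C_L\beta e^{-\beta L_m}\,\frac1N\sum_{i=1}^N\mathbb E|X_t^i-\bar X_t^*|^2\ \geq\ -(2\lambda+\sigma^2)C_L\beta e^{-\beta L_m}D\,e^{-(2\lambda-\sigma^2)t},
\]
with $D:=\sum_{l=1}^d\mathbb E\big[\max_{1\le i\le N}(x_0^{i,l}-\bar x_0^l)^2\big]$. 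Integrating over $(0,T)$, letting $T\to\infty$, and using that $X_t^i\to X_\infty$ a.s.\ (Lemma \ref{L4.2}(ii)) with $0<e^{-\beta L}\le e^{-\beta L_m}$ bounded — so that dominated convergence yields $\frac1N\sum_i\mathbb Ee^{-\beta L(X_t^i)}\to\mathbb Ee^{-\beta L(X_\infty)}$ — together with $X_0^i\sim X^{in}$ i.i.d., one arrives at
\[
\mathbb Ee^{-\beta L(X_\infty)}\ \geq\ \mathbb Ee^{-\beta L(X^{in})}-\frac{2\lambda+\sigma^2}{2\lambda-\sigma^2}C_L\beta e^{-\beta L_m}D\ \geq\ \varepsilon\,\mathbb Ee^{-\beta L(X^{in})},
\]
the last inequality being exactly the standing assumption on $\lambda,\sigma$ and the initial data.

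The second half is to turn this into an $\operatorname{ess~inf}$ statement. Since $L(X_\infty)\ge\operatorname{ess~inf}_{\omega}L(X_\infty(\omega))$ a.s., we have $\mathbb Ee^{-\beta L(X_\infty)}\le e^{-\beta\operatorname{ess~inf}_{\omega}L(X_\infty(\omega))}$, so applying $-\tfrac1\beta\log(\cdot)$ to the previous display gives
\[
\operatorname{ess~inf}_{\omega\in\Omega}L(X_\infty(\omega))\ \leq\ -\frac1\beta\log\varepsilon\ -\ \frac1\beta\log\mathbb Ee^{-\beta L(X^{in})}.
\]
I would then bound the last term by Laplace's principle: writing $m_{in}:=\operatorname{ess~inf}_{\omega}L(X^{in}(\omega))=\inf_{x\in\operatorname{supp}\operatorname{law}(X^{in})}L(x)$, for any $\delta>0$ the open set $\{L<m_{in}+\delta\}$ has positive $\operatorname{law}(X^{in})$-mass (by definition of the infimum over the support and continuity of $L$), hence $\mathbb Ee^{-\beta L(X^{in})}\ge e^{-\beta(m_{in}+\delta)}\operatorname{law}(X^{in})(\{L<m_{in}+\delta\})$ and therefore $-\tfrac1\beta\log\mathbb Ee^{-\beta L(X^{in})}\le m_{in}+{\mathcal O}(1/\beta)$ for $\beta\gg1$; since also $-\tfrac1\beta\log\varepsilon={\mathcal O}(1/\beta)$, this is the first assertion. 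For the consequence, if $X^*\in\operatorname{supp}\operatorname{law}(X^{in})$ then continuity of $L$ forces $m_{in}=L(X^*)=L_m$, which yields $\operatorname{ess~inf}_{\omega}L(X_\infty(\omega))\le L_m+{\mathcal O}(1/\beta)$.

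I expect the main obstacle to be the passage $T\to\infty$ in the integration step: everything there rests on the almost-sure consensus $X_t^i\to X_\infty$ from Lemma \ref{L4.2}(ii), valid precisely when $2\lambda>\sigma^2$, without which the left-hand limit need not exist, let alone equal $\mathbb Ee^{-\beta L(X_\infty)}$. A secondary, more technical point is the Laplace step, where expressing the rate of $-\tfrac1\beta\log\mathbb Ee^{-\beta L(X^{in})}\to m_{in}$ as ${\mathcal O}(1/\beta)$ relies on the mild nondegeneracy of $\operatorname{law}(X^{in})$ near the minimizer that is implicit in the statement.
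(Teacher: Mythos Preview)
Your proposal is correct and follows essentially the same route as the paper: combine \eqref{D-9} with Lemma~\ref{L4.3} and Lemma~\ref{L4.2}(i) to get the differential inequality, integrate and pass to $t\to\infty$ via Lemma~\ref{L4.2}(ii), deduce $\mathbb Ee^{-\beta L(X_\infty)}\ge\varepsilon\,\mathbb Ee^{-\beta L(X^{in})}$, then take $-\tfrac1\beta\log$ and invoke Laplace's principle. If anything, you supply more justification than the paper does (the dominated convergence for the limit in $t$, the explicit bound $\mathbb Ee^{-\beta L(X_\infty)}\le e^{-\beta\operatorname{ess~inf}L(X_\infty)}$, and the Laplace argument via positive mass of near-minimizers), while the paper simply states these passages.
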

\begin{proof}

In \eqref{D-9}, we use \eqref{D-10}, \eqref{D-11} and Lemma \ref{L4.1} (iii) to find 
\begin{align}\label{D-12}
\begin{aligned}
\frac{d}{dt}\left( \frac{1}{N}\sum_{i=1}^N \mathbb E e^{-\beta L(X_t^i)}\right)&\geq-\left(\lambda+\frac{1}{2}\sigma^2\right) C_L\beta e^{-\beta L_m}\frac{1}{N}\sum_{i=1}^N\mathbb E|X_t^i-\bar X_t^*|^2\\
&\geq -2\left(\lambda+\frac{1}{2}\sigma^2\right) C_L\beta e^{-\beta L_m}e^{-(2\lambda-\sigma^2)t} \sum_{l=1}^d \mathbb E \Big[ \max_{1\leq i\leq N} (x_0^{i,l} -{\bar x}^l_0)^2 \Big].
\end{aligned}
\end{align}
Now integrating \eqref{D-12} in $t$ gives 
\begin{align}
\begin{aligned} \label{D-13}
&\frac{1}{N}\sum_{i=1}^N \mathbb E e^{-\beta L(X_t^i)} \\ 
&\geq \frac{1}{N}\sum_{i=1}^N \mathbb E e^{-\beta L(X_0^i)} -2\left(\lambda+\frac{1}{2}\sigma^2\right) C_L\beta e^{-\beta L_m}\frac{1-e^{-(2\lambda-\sigma^2)t}}{2\lambda-\sigma^2}\sum_{l=1}^d  \mathbb E \Big[ \max_{1\leq i\leq N} (x_0^{i,l} -{\bar x}^l_0)^2 \Big] \\
&\geq \frac{1}{N}\sum_{i=1}^N \mathbb E e^{-\beta L(X_0^i)} -\frac{2\lambda+\sigma^2}{2\lambda-\sigma^2} C_L\beta e^{-\beta L_m}\sum_{l=1}^d \mathbb E \Big[ \max_{1\leq i\leq N} (x_0^{i,l} -{\bar x}^l_0)^2 \Big].
\end{aligned}
\end{align}
Letting $t\to\infty$, and we use Lemma \ref{L4.2} (i) to find
\[
\mathbb Ee^{-\beta L(X_\infty)} \geq \frac{1}{N}\sum_{i=1}^N \mathbb E e^{-\beta L(X_0^i)} -\frac{2\lambda+\sigma^2}{2\lambda-\sigma^2} C_L\beta e^{-\beta L_m}\sum_{l=1}^d \mathbb E \Big[ \max_{1\leq i\leq N} (x_0^{i,l} -{\bar x}^l_0)^2 \Big] \geq \varepsilon\mathbb E e^{-\beta L(X^{in})},
\]
i.e.,
\[
-\frac{1}{\beta}\log \mathbb E e^{-\beta L(X^{\infty})}\leq-\frac{1}{\beta}\log \mathbb E e^{-\beta L(X^{in})}-\frac{1}{\beta}\log\varepsilon.
\]
Now  Laplace's principle implies
\[
\operatorname{ess~inf}_{\omega\in\Omega} L(X^\infty(\omega))\leq \operatorname{ess~inf}_{\omega\in\Omega} L(X^{in}(\omega))+O\Big(\frac{1}{\beta}\Big) \quad \mbox{for $\beta \gg 1$}.
\]
\end{proof}

\section{Numerical simulations} \label{sec:5}
\setcounter{equation}{0}
In this section, we conduct several numerical tests to verify the results of the convergence analysis. For a numerical test, we use the Rastrigin function as in \cite{C-J-L-Z,P-T-T-M} as the objective function:
\[ L(X) =  \sum_{i=1}^{d} \Big[  (x^i - B)^2 - 10 \cos(2\pi(x^i - B)) + 10    \Big]  + C,        \]
where constants $B$ and $C$ are given by
\[ B:= \mbox{argmin}~L(X), \quad C:= \mbox{min}~L(X). \]
Note that this function has a unique global minimizer, namely $X=(B, \cdots, B) \in \bbr^d$. However, it has many local minimizers as can be seen Figure 1 (see the graph for $L$ is provided in Figure \ref{Fig1} with $d = 2, B = C= 0$).  \newline
\begin{figure}[h!]
\centering
\includegraphics[width=1\textwidth]{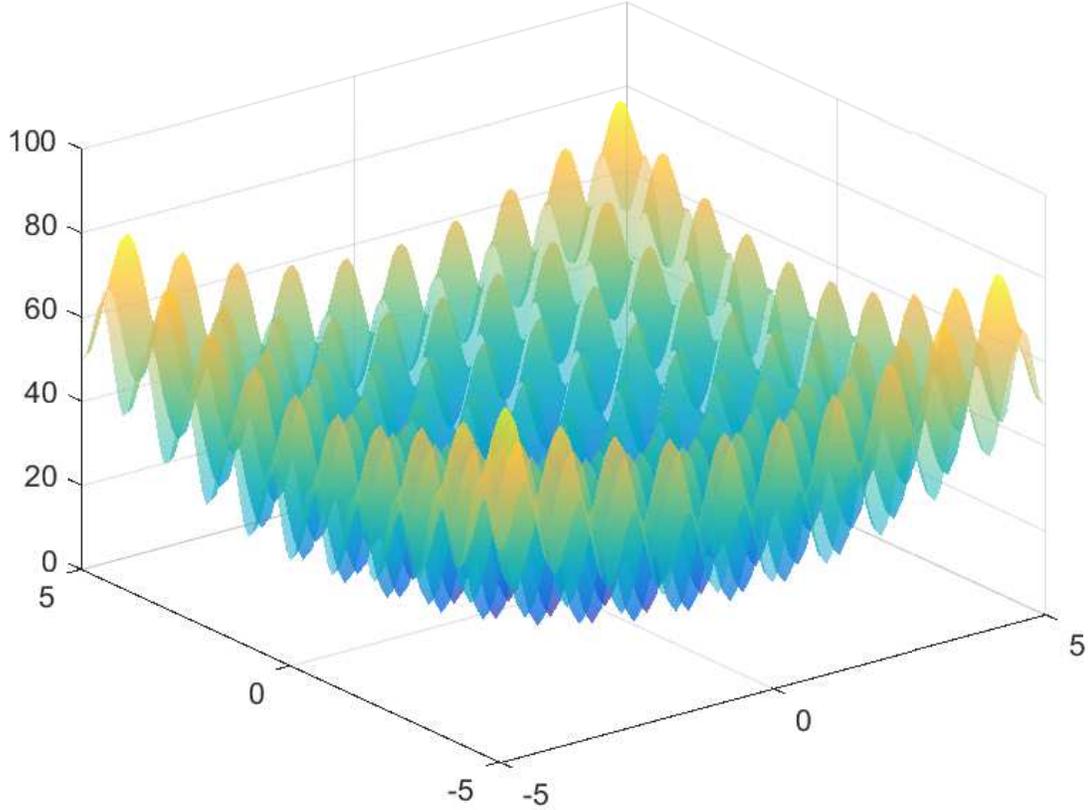}
\caption{A 3D plot of the Rastrigin function}
\label{Fig1}
\end{figure}
For the initial data and system parameters, we choose $N=100$ points uniformly from the square $[-2,2]\times[-2,2]$, which includes a global minimum point, and use parameters:
\[
\Delta t= h = 0.01,\quad N=100, \quad \beta=10,\quad \lambda=1.
\]
For the same chosen initial data set as above, we perform simulations for $\sigma=0,1,2$ and compare the results.
\subsection{Continuous algorithm} For the simulations of the continuous algorithm, we use the following two-step numerical scheme:
\begin{align*}
\begin{aligned}
&\hat X_n^i=\bar X_n^*+(X_n^i-\bar X_n^*)e^{-\lambda h},\\
&X_{n+1}^i=\hat X_n^i+\sigma\sqrt{h}\sum_{l=1}^d (\hat x_n^{i,l}-\bar x_n^{*,l}) w_n^l e_l, 
\end{aligned}
\end{align*}
where $w_n^l$ $(l=1,\cdots,d, n=0,1,2,\cdots)$ are independent and follow the standard normal distribution, and $\Delta t$ is the time step. In Section \ref{sec:3}, we derived the following explicit formula for the continuous algorithm:
\[
x_t^{i,l} -x_t^{j,l} =(x_0^{i,l}-x_0^{j,l})\exp\left[ -\Big(\lambda+\frac{1}{2}\sigma^2\Big)t+\sigma W_t^l\right ].
\]
It is easy to see  from the above formula that the particles will reach a global consensus almost surely if and only if the coupling strength and noise intensity satisfy
\[ \lambda>0\quad\mbox{or}\quad |\sigma|>0. \]

Note that for larger $\sigma$, the speed of consensus on average is faster. Note that these facts do not require the condition $2\lambda>\sigma^2$ as can be seen in convergence analysis in Section \ref{sec:4}. 
One can observe this result numerically. In Figures \ref{Fig2},\ref{Fig3} and \ref{Fig4}, we plot the positions of the particles for $\sigma=0,1,2$, respectively. Indeed, the particles seem to converge faster, as $\sigma$ increases. In Figure \ref{Fig5}, we plot a sample path of $\log|x_t^{1,1}-x_t^{2,1}|$ for $\sigma=0,1,2$. As expected, the graph for $\sigma=0$ is linear, and the function eventually decays faster for  large $\sigma$.
\begin{figure}[h!]
\centering
\includegraphics[width=1\textwidth]{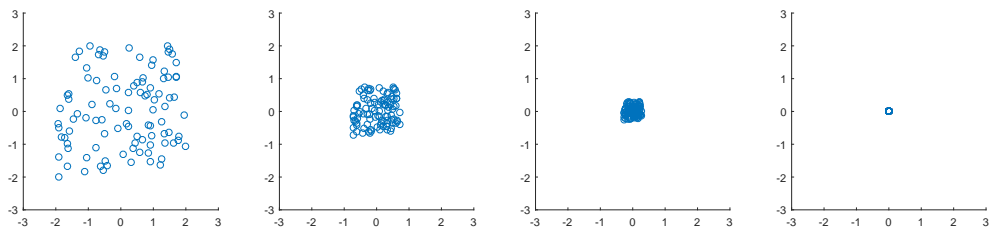}
\caption{Temporal evolution of state configuration for $t=0,1,2,10$ $(\sigma=0)$.}
\label{Fig2}
\end{figure}

\begin{figure}[h!]
\centering
\includegraphics[width=1\textwidth]{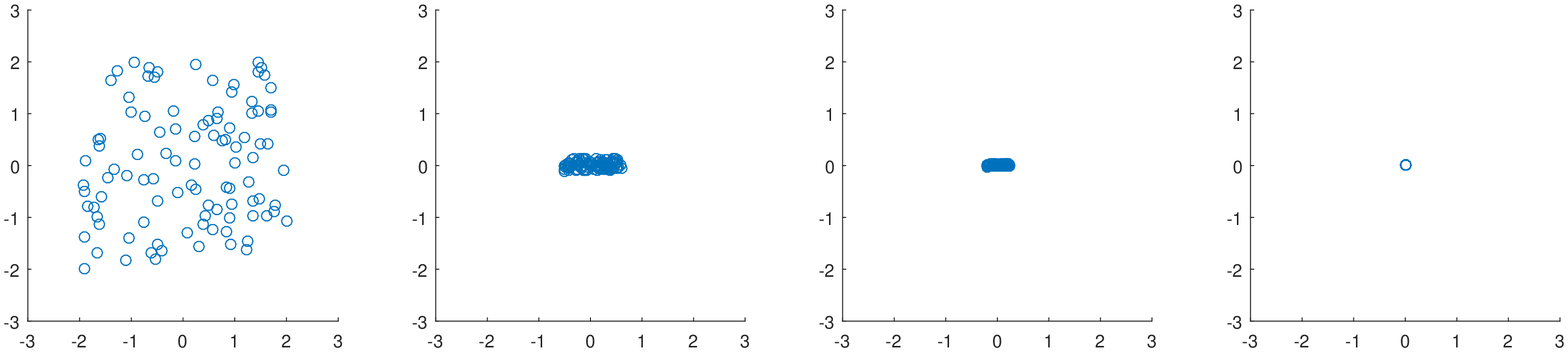}
\caption{Temporal evolution of state configuration for $t=0,1,2,10$ $(\sigma=1)$.}
\label{Fig3}
\end{figure}

\begin{figure}[h!]
\centering
\includegraphics[width=1\textwidth]{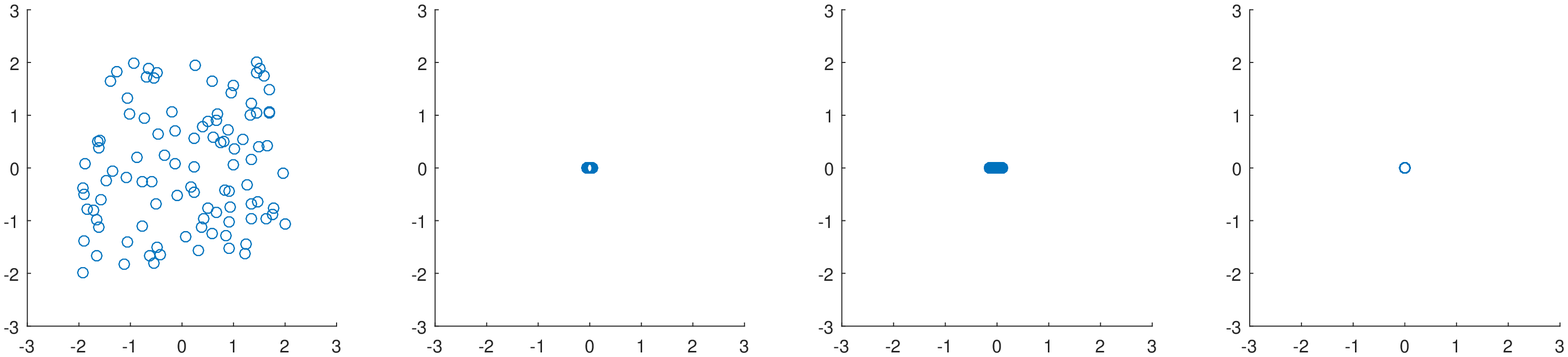}
\caption{Temporal evolution of state configuration for $t=0,1,2,10$ $(\sigma=2)$.}
\label{Fig4}
\end{figure}

\begin{figure}[h!]
\centering
\includegraphics[width=1\textwidth]{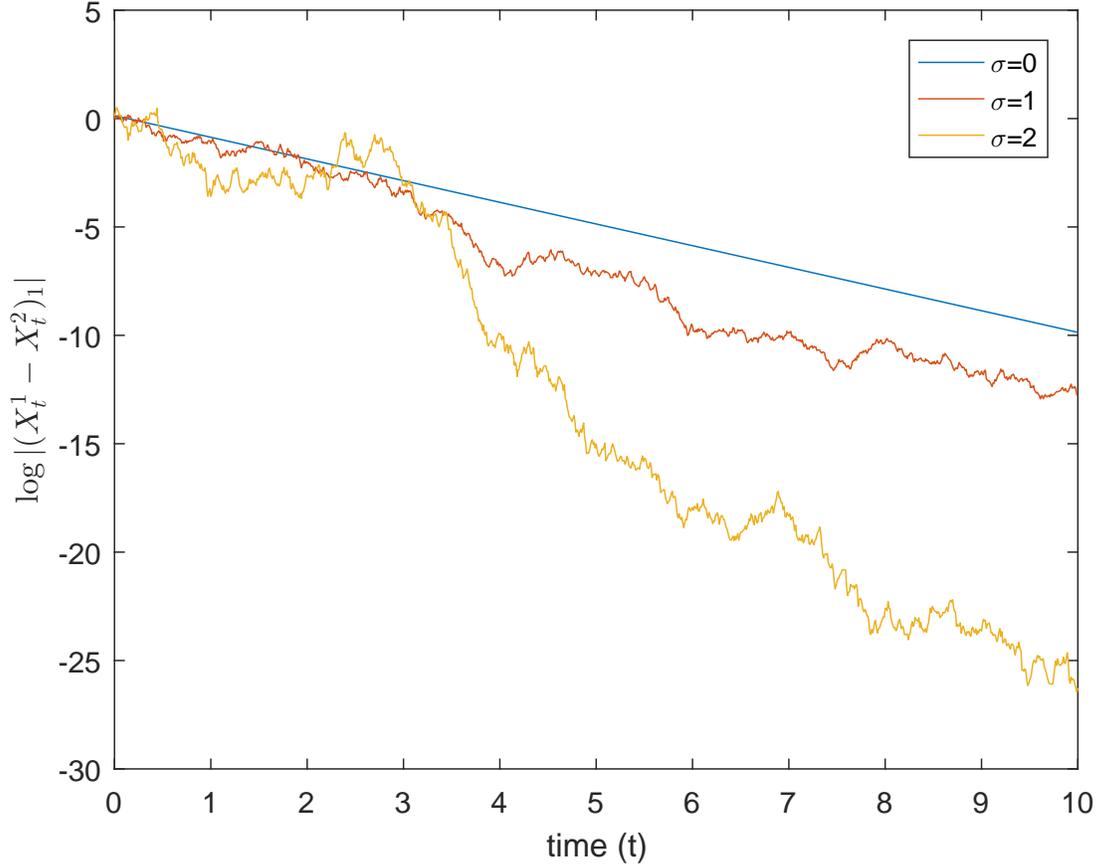}
\caption{Graph of $\log|x_t^{1,1}-x_t^{2,1}|$ for $\sigma=0,1,2$.}
\label{Fig5}
\end{figure}

\subsection{Discrete algorithm}
Recall the discrete algorithm:
\begin{equation} \label{F-1}
\begin{cases}
\displaystyle X^i_{n +1} = X^i_n  + \lambda h \sum_{k = 1}^{N} \psi_n^k (X_n^k - X_n^i) + \sigma \sqrt{h} \sum_{k = 1}^{N} \sum_{l=1}^{d} \psi_n^k (x_n^{k,l} - x_n^{i,l})  Z^l_n e_l, \\
\displaystyle \psi^k_n := \frac{e^{-\beta L(X^k_n)}}{\sum_{i=1}^{N} e^{-\beta L(X^i_n)}}, \quad 1 \leq k \leq N, \quad  n = 0, 1, \cdots.
\end{cases}
\end{equation}
In this subsection, we study the formation of global consensus for the discrete algorithm \eqref{F-1} numerically. In Theorem \ref{T3.4}, we have shown that if $m:=\lambda-\frac{\sigma^2}{2}-\frac{\lambda^2h}{2}>0$ then the quantity $\Delta_n^{ij} := |X_n^i - X_n^j|$ tends to zero almost surely, as $n \to \infty$ with a decay rate approximately $\exp(-\frac{1}{2}mnh)$. Figure \ref{Fig6} indicates that this result is not optimal. To compute $\mathbb E\Big[ \log|x_t^{1,1}-x_t^{2,1}| \Big] $, we simulated 100 sample paths and then took average of those paths. Although $2 \lambda <\sigma^2$ for $\sigma=2$, $\Delta_n^{ij}$ converges in this case. Moreover, although the decay rate obtained in Theorem \ref{T3.4} decreases, as $\sigma$ increases, one can see that the decay rate increases as $\sigma$ increases.

\begin{figure}[h!]
\centering
\includegraphics[width=1\textwidth]{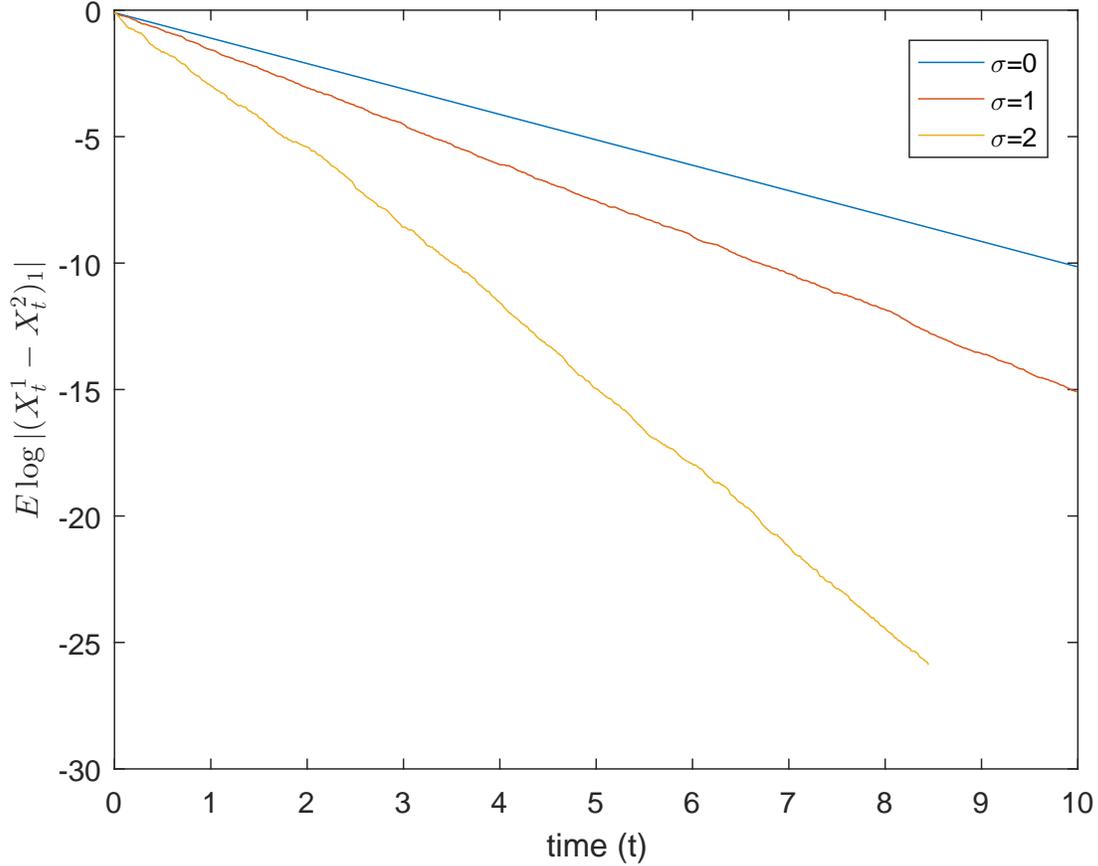}
\caption{Graph of $\mathbb E \Big[ \log|x_n^{1,1}-x_n^{2,1}| \Big]$ for $\sigma=0,1,2$.}
\label{Fig6}
\end{figure}

\section{conclusion} \label{sec:6}
\setcounter{equation}{0}
In this paper, we have provided a rigorous convergence analysis for the first-order consensus-based optimization algorithm. In \cite{C-J-L-Z}, the convergence was understood using the corresponding mean-field limit, the Fokker-Planck equation. Thus, the convergence  of the original CBO algorithm remains unresolved there. The main contribution of this work is to provide the convergence analysis directly on the  CBO algorithm model
at the  particle level. After rewriting the given continuous optimization algorithm into a first-order consensus form, we use the detailed structure of the coupling term to derive an exact formula for the state differences. Our explicit formula shows that global consensus will emerge for any initial data, whereas in order to prove the convergence toward a global minimum,  we need a sufficient condition--which is dimension independent-- for systems parameters and initial data to show that the global consensus state tends to a global minimum, as the reciprocal of temperature tends to infinity using Laplace's principle. The emergence of global consensus will emerge for continuous and discrete algorithms. However, we can obtain the convergence analysis only for the continuous algorithm due to Ito's stochastic analysis for twice differentiable and bounded objective functions. In contrast, for discrete algorithm, we do not have available mathematical tools to derive convergence analysis at present. 

There are several issues which we cannot deal with in this paper. To name a few, it will be interesting to relax the regularity of the objective function to the less regular objective function, at least continuous one. Finally, random batch methods were used to reduce the computational cost of the $N$-term summation in \cite{C-J-L-Z, J-L-L} in which convergence remains as an open question.

Moreover, it will be interesting to see whether our presented analysis can be applied to other metaheuristic algorithms based on the swarm intelligence. These issues will be discussed in a future work.


\begin{thebibliography}{00}

\bibitem{A-B} Acebron, J. A., Bonilla, L. L., P\'{e}rez Vicente, C. J. P., Ritort, F. and Spigler, R.: \textit{The Kuramoto model: A simple paradigm for synchronization phenomena.} Rev. Mod. Phys. \textbf{77} (2005), 137-185.

\bibitem{A-H} Ahn, S. M. and Ha, S.-Y.: \textit{Stochastic flocking dynamics of the Cucker-Smale model with multiplicative white noises.} J. Math. Physics. {\bf 51} (2011), 103301.

\bibitem{A-B-F} Albi, G., Bellomo, N., Fermo, L., Ha, S.-Y., Pareschi, L., Poyato, D. and Soler, J.: \textit{Vehicular traffic, crowds, and swarms. On the kinetic theory approach towards research perspectives.} Math. Models Methods Appl. Sci. {\bf 29} (2019), 1901-2005.






\bibitem{Be} Bertsekas, D.: \textit{Convex Analysis and Optimization.} Athena Scientific. 2003.


\bibitem{C-J-L-Z} Carrillo, J. A., Jin, S., Li, L. and Zhu, Y.: \textit{A consensus-based global optimization method for high dimensional machine learning problems.} Submitted.

\bibitem{C-C-T-T} Carrillo, J., Choi, Y.-P., Totzeck, C. and Tse, O.: \textit{An analytical framework for consensus-based global optimization method.} Mathematical Models and Methods in Applied Sciences {\bf 28} (2018), 1037-1066.




\bibitem{C-H-L} Choi, Y.-P., Ha, S.-Y. and Li, Z.: \textit{Emergent dynamics of the Cucker-Smale flocking model and its variants.} In N. Bellomo, P. Degond, and E. Tadmor (Eds.), Active Particles Vol.I Theory, Models, Applications (tentative title), Series: Modeling and Simulation in Science and Technology, Birkhauser, Springer. 2017.


\bibitem{E-K} Eberhart, R. and Kennedy, J.: \textit{Particle swarm optimization.} Proceedings of the IEEE International Conference on Neural Networks {\bf 4} (1995), 1942-1948.

\bibitem{C-S} Cucker, F. and Smale, S.: \textit{On the mathematics of emergence.} Japanese Journal of Mathematics {\bf 2} (2007), 197-227.

\bibitem{F-H-J} Fang, D., Ha, S.-Y. and Jin, S.: \textit{Emergent behaviors of the Cucker-Smale ensemble under attractive-repulsive couplings and Rayleigh frictions.} Math. Models Methods Appl. Sci. {\bf 29} (2019), 1349-1385

\bibitem{H-L} Ha, S.-Y. and Liu, J.-G.: \textit{A simple proof of Cucker-Smale flocking dynamics and mean-field limit.} Commun. Math. Sci. {\bf 7} (2009), 297-325. 

\bibitem{H-L-L}  Ha, S.-Y., Lee, K. and Levy, D.: \textit{Emergence of time-asymptotic flocking in a stochastic Cucker-Smale system.} Commun. Math. Sci. {\bf7} (2009), 453-469. 


\bibitem{Ho} Holland, J. H.: \textit{Genetic algorithms.} Scientific American {\bf 267} (1992), 66-73.

\bibitem{J-L-L} Jin, S. Li, L. and Liu, J.-G.: \textit{Random batch methods(RBM) for interacting particle systems.} J. Comp. Phys., to appear. arXiv:1812.10575, 2018.

\bibitem{Ke} Kennedy, J.: \textit{Swarm intelligence}. Handbook of nature-inspired and innovative computing. Springer, 187-219 (2006).

\bibitem{K-G-V} Kirkpatrick, S., Gelatt, C. D. and Vecchi, M. P.: \textit{Optimization by simulated annealing.} Science {\bf 220} (1983), 671-680.

\bibitem{K-C-B-F-L} Kolokolnikov, T., Carrillo, J. A., Bertozzi, A., Fetecau, R. and Lewis, M.: \textit{Emergent behavior in a multi-particle systems with non-local interactions.} Physica D {\bf 260} (2013), 1-4.

\bibitem{Ku1} Kuramoto, Y.: \textit{Chemical oscillations, waves and turbulence.} Springer-Verlag, Berlin, 1984.

\bibitem{Ku2} Kuramoto, Y.: \textit{International symposium on mathematical problems in mathematical physics.} Lecture Notes Theor. Phys.  \textbf{30} (1975), 420.

\bibitem{L-A} Laarhoven, P. J. M. van and Aarts, E. H. L.: \textit{Simulated annealing: theory and applications.} D. Reidel Publishing Co., Dordrecht, 1987.

\bibitem{M-T} Motsch, S. and Tadmor, E.: \textit{Heterophilious dynamics enhances consensus.} SIAM. Rev. {\bf 56} (2014), 577-621.

\bibitem{Pe} Peskin, C. S.: \textit{Mathematical aspects of heart physiology.} Courant Institute of Mathematical Sciences, New York, 1975.

\bibitem{P-T-T-M} Pinnau, R., Totzeck, C., Tse, O. and Martin, S.: \textit{A consensus-based model for global optimization and its mean-field limit}. Math. Models Methods Appl. Sci. {\bf 27} (2017), 183-204.

\bibitem{P-R-K}  Pikovsky, A., Rosenblum, M. and Kurths, J.:  \textit{Synchronization: A universal concept in nonlinear sciences.} Cambridge University Press, Cambridge, 2001.


\bibitem{T-P-B-S} Totzeck, C. Pinnau, R., Blauth, S. and Schotth\'{o}fer, S.: \textit{A numerical comparison of consensus-based global optimization to other particle-based global optimization scheme.} Proceedings in Applied Mathematics and Mechanics, {\bf 18}, 2018.

\bibitem{V-Z} Vicsek, T. and Zefeiris, A.: \textit{Collective motion.} Phys. Rep. {\bf 517} (2012), 71-140.

\bibitem{Ya} Yang, X.-S.: \textit{Nature-inspired metaheuristic algorithms.} Luniver Press, 2010.
\bibitem{Y-D} Yang, X.-S., Deb, S., Zhao, Y.-X., Fong, S. and He, X.: \textit{Swarm intelligence: past, present and future}. Soft Comput {\bf 22} (2018), 5923-5933. 

\end{thebibliography}
\end{document}